\newtheorem{theorem}{Theorem}[section]
\newtheorem{lemma}[theorem]{Lemma}
\newtheorem{proposition}[theorem]{Proposition}
\newtheorem{corollary}[theorem]{Corollary}
\newtheorem{remark}[theorem]{Remark}
\def\rr{\mathbb{R}}
\def\bb{\mathbb{B}}
\def\O{\Omega}
\def\p{\partial}
\def\a{\alpha}
\def\d{\delta}
\def\p{\partial}
\def\S{{\Sigma}}
\def\<{\langle}
\def\>{\rangle}
\def\div{{\rm div}}
\def\n{\nabla}
\def\R{{\mathbb R}}
\def\p{\partial}
\newcommand{\mbR}{\mathbb{R}}
\numberwithin{equation} {section}
\begin{document}
	
	\title[Heintze-Karcher inequality]{Capillary hypersurfaces, Heintze-Karcher's inequality and Zermelo's navigation}

		\author{Guofang Wang} \thanks{${}^*$GW is the corresponding author}
	\address{Mathematisches Institut\\
		Universit\"at Freiburg\\
	Ernst-Zermelo-Str.1,
		79104, Freiburg, Germany}
	\email{guofang.wang@math.uni-freiburg.de}
	
	\author{Chao Xia}
	\address{School of Mathematical Sciences\\
		Xiamen University\\
		361005, Xiamen, P.R. China}
	\email{chaoxia@xmu.edu.cn}
\thanks{CX is  supported by the NSFC (Grant No. 12271449, 12126102)}

	\begin{abstract}
	
	In this paper, we establish a Heintze-Karcher-type inequality for capillary hypersurfaces in a unit ball. To achieve this, we introduce a special Finsler metric given by Zermelo's navigation and study the geodesic normal flow with respect to this Finsler metric.  Our results indicate that the relationship between capillary hypersufaces  and hypersurfaces with free boundary is similar to the one between Finsler geometry and Riemannian geometry.
		
		\

\noindent {\bf MSC 2020: 53C24,  53C21, 53C60}\\
		{\bf Keywords:}   
		capillary hypersurface, Heintze-Karcher's inequality, Zermelo's navigation problem, Randers metric.

	\end{abstract}
	
	\maketitle
	
	\medskip

\section{introduction}

The celebrated Alexandrov Theorem, which states  that
\textit{a closed embedded CMC hypersurface in the Euclidean space is a round sphere,}
plays an important role in differential geometry. Among several interesting proofs, one  is based on the Heintze-Karcher inequality
 \begin{equation*}
 \label{eq1}
 \int_\S \frac 1 H \ge \frac {n+1}n |\Omega|,
 \end{equation*}
 for any strictly mean-convex hypersurface $\S\subset\mathbb{R}^{n+1}$, where $\Omega \subset\R^{n+1}$ is the enclosed domain of $\S$ and $H$ is the mean curvature, with equality holding only on round spheres. This sharp inequality was essentially contained in the seminal paper of Heintze-Karcher \cite{HK78} and was given in this form by Ros \cite{Ros87}. Since then, the Heintze-Karcher inequality becomes one of fundamental geometric inequalities among the isoperimetric inequality and the Alexandrov-Fenchel inequalities.
 
 There are two main approaches to establish Heintze-Karcher inequalities.
  One uses a Reilly-type formula, which was first used by Ros \cite{Ros87} and then by Qiu-Xia \cite{QX15} and Li-Xia \cite{LX19} for various Heintze-Karcher-type inequalities in Riemannian manifolds with certain condition.
See also \cite{MP19}, \cite{FP22},  and also \cite{HMR18, De14}, where a Reilly-type formula for spinors  was  used.
 Another method, which we will follow closely in this paper  and extend, was introduced in the original paper of Heintze-Karcher \cite{HK78} by considering a (singular) foliation of suitable parallel hypersurfaces. 
 In warped product manifolds, Brendle \cite{Br13} generalized this idea  by considering a conformal flow with respect to the warped product factor $f$, which coincides with the geodesic normal flow with respect to a conformal metric determined by $f$. 
 
 In this paper, we will extend this idea further to establish a Heintze-Karcher inequality for capillary hypersurfaces in the unit Euclidean ball $\mathbb{B}^{n+1}$, which remains open since our work on free boundary hypersurfaces in \cite{WX19}.
It will be interesting to see that in order to establish such an inequality it is natural to  work with Finsler metrics. In fact, instead of using the conformal flow by Brendle, we will use geometric flows with respect to a Finsler metric of Randers-type which is constructed suitably for the study of capillary hypersurfaces in the unit ball. This Randers metric solves Zermelo's navigation problem generated by the hyperbolic metric and a constant vector field. 

In order to state our results more precisely, let us begin to introduce capillary hypersurfaces. For a hypersurface with boundary on a given support hypersurface, one can define a contact angle at any boundary point, $\theta(x)$.
Such a hypersurface is called a $\theta_0$-capillary hypersurface if $\theta(x)=\theta_0$ $\forall x\in \p\Sigma$ for some $\theta_0\in (0,\pi)$ . 
In the special case $\theta_0=\frac{\pi}{2}$, it is called a hypersurface with free boundary or a free boundary hypersurface. 
We will work in the Euclidean half ball $\overline{\mathbb{B}^{n+1}}\cap \rr^{n+1}_+$
, where $\mathbb{B}^{n+1}=\{|x|<1\}$ and $\rr^{n+1}_+=\{x_{n+1}>0\}$. The following Heintze-Karcher-type inequality has been proved in  \cite{WX19}.

 \
 
 \noindent{\bf Theorem A} (\cite{WX19})
{\it Let $\S\subset \overline{\mathbb{B}^{n+1}}\cap \rr^{n+1}_+$ be a compact hypersurface with  boundary $\p\S\subset \p \mathbb{B}^{n+1}$.
such that $\p\S$ intersects $\p \mathbb{B}^{n+1}$ transversally with  contact angle  $\theta(x)\le \frac{\pi}{2}$ for any $x\in \p\S$. 
Assume that $\S$ is strictly mean convex. Then
\begin{eqnarray} \label{eq_a1.1}
\int_\S \frac{x_{n+1}}{H}\ge \frac{n+1}{n}\int_\O x_{n+1}.
\end{eqnarray} Moreover, equality holds if and only if
$\Sigma$ is a free boundary spherical cap.
}

\ 

Theorem A has been proved by a weighted Reilly-type formula by Qiu-Xia \cite{QX15} applying on the solution to a mixed boundary value problem.
By the same approach, the following form has been proved by Jia-Xia-Zhang \cite{JXZ22} for capillary hypersurfaces:
	\begin{align}
 \label{EQ-HK-halfball}
			\int_\S\frac{x_{n+1}}{H} \ge\frac{n+1}{n} \int_\O x_{n+1} -\cos\theta_0\frac{\left(\int_\S \<\nu, E_{n+1}\>\right)^2}{\int_\S H\<\nu, E_{n+1}\> },
		\end{align}
where $\nu$ is the unit outward normal to $\S$ and  $E_{n+1}$ is the $(n+1)$-$th$ unit coordinate vector. However, in view of the Minkowski-type formula
\begin{equation}\label{mink-halfspace3}
		\int_\S n(x_{n+1}+\cos\theta_0\<\nu, E_{n+1}\>) -H\<X_{n+1},\nu\>=0,\end{equation}
   that has been established by the authors \cite{WX19}, a more natural form of Heintze-Karcher inequality, \eqref{eq_a1} below, is expected. Here $X_{n+1}=x_{n+1}x-\frac12(|x|^2+1)E_{n+1}$ is a conformal Killing field,
   which played a crucial role in the classification of stable CMC capillary hypersurfaces in \cite{WX19}.
   Moreover it holds that $$\int_\S \<X_{n+1},\nu\>=(n+1)\int_\O x_{n+1}.$$ 
   Our main result in this paper is the following.

  \begin{theorem} \label{thm1.2}
Let $\theta_0\in (0, \pi)$ and $\S\subset \overline{\mathbb{B}^{n+1}}\cap \rr^{n+1}_+$ be a compact hypersurface with boundary $\p\S\subset \p \mathbb{B}^{n+1}$ such that \begin{eqnarray}\label{assump}
\S\subset \{x_{n+1}>|\cos\theta_0|\}
\end{eqnarray}
and $\p\S$ intersects $\p \mathbb{B}^{n+1}$ transversally with  contact angle  $\theta(x)\le \theta_0$ for each $x\in \p\S$. Assume that $\S$ is strictly mean convex. Let $\O$ be the domain enclosed by $\S$ and $\mathbb{S}^n_+$.  Then
\begin{eqnarray}\label{eq_a1}
\int_\S \frac{x_{n+1}+\cos\theta_0\<\nu, E_{n+1}\>}{H}\ge \frac{n+1}{n}\int_\O x_{n+1}.
\end{eqnarray}Moreover, equality holds if and only if
$\Sigma$ is a $\theta_0$-capillary spherical cap.
\end{theorem}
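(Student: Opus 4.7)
The plan is to follow the paper's announced Finsler strategy: adapt the classical Heintze--Karcher argument by running the geodesic normal flow with respect to a Randers metric $F$, built via Zermelo's navigation from the hyperbolic metric and a constant vector field, chosen so that the $\theta_0$-capillary condition becomes an $F$-free-boundary condition and the weight $x_{n+1}+\cos\theta_0\langle\nu,E_{n+1}\rangle$ on the left of \eqref{eq_a1} arises automatically from the Finsler area form.

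First I would define $F$ on a neighborhood of $\Omega$ by Zermelo navigation from the data $(h,W)$, where $h = x_{n+1}^{-2}\,g_{\mathrm{eucl}}$ is the upper half-space hyperbolic metric and $W = -\cos\theta_0\,E_{n+1}$ is a constant Euclidean drift. Since $|W|_h = |\cos\theta_0|/x_{n+1}$, the Zermelo admissibility $|W|_h<1$ is exactly the hypothesis \eqref{assump}, and $F = \alpha + \beta$ is a bona fide Randers metric along $\Sigma$. Next I would verify that the $\theta_0$-capillary condition translates (given this choice of $W$) into the statement that the $F$-outward conormal of $\Sigma$ along $\partial\Sigma$ is $F$-tangent to $\partial\mathbb{B}^{n+1}$, i.e.\ $\Sigma$ is an $F$-free boundary hypersurface. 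Heuristically, the Randers $\beta$-term contributes exactly the $\cos\theta_0\langle\nu,E_{n+1}\rangle$ defect that neutralizes the angle tilt, consistent with the Minkowski identity \eqref{mink-halfspace3} and the tangency of $X_{n+1}$ to $\partial\mathbb{B}^{n+1}$. The weaker inequality $\theta(x)\le\theta_0$ should correspond to an $F$-one-sided (convex) contact, weaker than $F$-orthogonality but strong enough for the next step.

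With $F$ in hand I would run the inward $F$-geodesic normal flow $\Phi:\Sigma\times[0,T^*)\to\Omega$, $\Phi(\cdot,0)=\mathrm{id}$, $\partial_t\Phi = -\nu_F$. The Finsler boundary condition from the previous step ensures $\Phi(\partial\Sigma\times[0,T^*))\subset\overline{\mathbb{B}^{n+1}}$ and that $\Phi$ covers $\Omega$ up to the $F$-cut locus. Standard Finsler computations give the Jacobian factorization $J_F(t,x)=\prod_{i=1}^n(1-t\,\kappa_i^F(x))$, where $\kappa_i^F$ are the $F$-principal curvatures of $\Sigma$, so AM--GM yields the pointwise bound
\begin{equation*}
\int_0^{T^*(x)} J_F(t,x)\,dt \leq \frac{n}{(n+1)\,H_F(x)}.
\end{equation*}
Integrating over $\Sigma$ and using surjectivity of $\Phi$ gives a Finsler Heintze--Karcher inequality $\mathrm{vol}_F(\Omega)\leq \tfrac{n}{n+1}\int_\Sigma H_F^{-1}\,d\mathrm{area}_F$.

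Finally I would translate Finsler volume, area and mean curvature back to Euclidean quantities. The key identifications I expect are $\mathrm{dvol}_F|_\Omega$ being a multiple of $x_{n+1}\,dx$, and $H_F\,d\mathrm{area}_F$ being a multiple of $H\,d\mathrm{area}/(x_{n+1}+\cos\theta_0\langle\nu,E_{n+1}\rangle)$, which together convert the Finsler inequality into \eqref{eq_a1}. For equality, AM--GM forces every $\kappa_i^F$ to coincide, so $\Sigma$ is $F$-umbilic; combined with the $F$-free boundary condition this identifies $\Sigma$ as an $F$-geodesic sphere, which under the Zermelo dictionary is precisely a $\theta_0$-capillary Euclidean spherical cap. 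The main obstacle I anticipate is this final translation: the bookkeeping between Finsler curvatures, areas, and volumes and their Euclidean counterparts must produce exactly the weights in \eqref{eq_a1}. A secondary technical point is verifying that $\Phi$ really foliates $\Omega$ under the weak contact condition $\theta(x)\le\theta_0$; it is here that \eqref{assump} plays a decisive role in keeping $F$-geodesics from $\partial\Sigma$ inside $\{x_{n+1}>0\}$ where Zermelo navigation is admissible.
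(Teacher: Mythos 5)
Your setup is essentially the paper's: Zermelo navigation from $(x_{n+1}^{-2}\delta,\pm\cos\theta_0 E_{n+1})$ producing a Randers metric $F$, translating $\theta_0$-capillarity to $F$-free-boundary (up to your sign convention this matches \eqref{normal-rel}), and running the inward $F$-geodesic normal flow. However, the core step of the argument has a genuine gap. The Jacobian factorization $J_F(t,x)=\prod_{i=1}^n(1-t\kappa_i^F(x))$ that you invoke is the formula for a \emph{flat} Minkowski space; it does not hold for this $F$, which is a genuinely $x$-dependent Randers metric. Its underlying Riemannian metric $\alpha$ from \eqref{alpha} is shown in the appendix to have \emph{negative} sectional curvature, so a Jacobi-field/Ricci comparison would push the Jacobian \emph{above} the flat prediction rather than below it, i.e.\ the sign goes against the bound $\int_0^{T^*} J_F\,dt\le \tfrac{n}{(n+1)H_F}$ you need. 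A ``Finsler Heintze--Karcher inequality'' in the form you state would require a nonnegative Ricci-type hypothesis that is simply not available here.

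The paper avoids this entirely by not introducing Finsler areas, volumes, or mean curvatures at all. It uses \eqref{normal-rel} to rewrite the $F$-geodesic normal flow as a Euclidean flow with explicit normal speed $x_{n+1}-\langle {\bf n},{\bf v_0}\rangle$ and tangential drift $-{\bf v_0}^T$ (equation \eqref{geod-normal-flow0}), then tracks the \emph{Euclidean} mean curvature $H$ and the \emph{Euclidean} area element via Proposition \ref{basic evolution eq}. The decisive inequality comes from $|h|^2\ge H^2/n$ applied to the Euclidean second fundamental form, not from any ambient Ricci bound, which is why no curvature sign problem arises. The resulting monotone quantity is directly $\int_{\S_t^*}(x_{n+1}-\langle{\bf n},{\bf v_0}\rangle)/H$ with Euclidean $H$, so there is no ``translation dictionary'' to verify at the end. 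You should therefore drop the Finsler-intrinsic bookkeeping and instead compute the evolution of the Euclidean $H$ and of $x_{n+1}-\langle{\bf n},{\bf v_0}\rangle$ under \eqref{geod-normal-flow0}; the weight in \eqref{eq_a1} then appears automatically as the normal speed, and the inequality follows from integrating $\partial_t(H/(x_{n+1}-\langle{\bf n},{\bf v_0}\rangle))\ge (H^2/n)\cdot x_{n+1}/(x_{n+1}-\langle{\bf n},{\bf v_0}\rangle)$. One further point you flag only in passing but which requires real work: surjectivity of $\Phi$ onto $\Omega$ under the weak contact condition $\theta\le\theta_0$ is proved in Lemma \ref{lem4.4} using the anisotropic comparison lemma from \cite{JWXZ23}, Proposition \ref{geod-same} (closedness of $\beta$ makes $F$-geodesics coincide as sets with $\alpha$-geodesics), and the appendix result that the relevant portion of $\mathbb{S}^n_+$ is geodesically strictly convex in $\alpha$; none of this is addressed by your remark that \eqref{assump} keeps geodesics in the admissible region.
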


The proof of Theorem \ref{thm1.2} shares a similar idea of Heintze-Karcher \cite{HK78} and Brendle \cite{Br13} by considering a foliation of suitable parallel hypersurfaces, or geodesic normal flow. When $\theta_0=\frac{\pi}{2}$, i.e. the free boundary case,
our initial observation is that instead of the Euclidean metric $\d$ one needs to use the Poincar\'e hyperbolic metric $g_h=x_{n+1}^{-2}\delta$ to define a geodesic flow on $\overline{\mathbb{B}^{n+1}}\cap \rr^{n+1}_+$.
The choice of the metric $g_h$ matches the weight in \eqref{eq_a1.1}.
Under this flow, the Heintze-Karcher deficit $\int_\S \frac{x_{n+1}}{H}- \frac{n+1}{n}\int_\O x_{n+1}$ has a good monotone property. Using this way we provide a different proof of Theorem A in Section 2.

When $\theta_0\neq\frac{\pi}{2}$, i.e., the general capillary case, instead of a Riemannian metric we need to use a Finsler metric to define a geodesic flow. Then the problem has a very close relationship to  Zermelo's navigation problem, which we shall make a brief introduction below. As in the free boundary case we endow the half space $\R^{n+1}_+$ the Poincar\'e hyperbolic metric $g_h$. Together with
the constant vector field ${\bf v_0}=\cos\theta_0 E_{n+1}$, $(g_h, {\bf  v_0})$ gives a navigation data, 
which generates a Randers metric 
$F$ given explicitly by \eqref{finsler}. 
The advantage of the use of this metric is that the  
capillary boundary condition in the Euclidean ball can be transferred to a free boundary condition with respect to this Finsler metric.
The correct geodesic normal flow we shall consider in this case is exactly the one with respect to this Finsler metric, and under this flow we see that the Heintze-Karcher deficit
$\int_\S \frac{x_{n+1}+\cos\theta_0\<\nu, E_{n+1}\>}{H}- \frac{n+1}{n}\int_\O x_{n+1}$ has also a good monotone property. However in this case we meet another difficulty,
the proof of  the surjectivity of the geodesic normal flow in the enclosed domain $\O$. The disadvantage of the Finsler metric is that we do not know how its geodesics behave. Nevertheless, we are able to overcome this difficulty and prove the surjectivity  by using the convexity of the ball under this metric. Finally we remark that the assumption \eqref{assump} is to ensure the well-definedness for the Finsler metric \eqref{finsler} generated by navigation data $(g_h, {\bf  v_0})$. If $\S$ is convex, we can make a rotation of $\S$ such that \eqref{assump} holds true, see \cite[Proposition 2.16]{WeX22}.

It is interesting to see that Finsler geometry is used to prove a result in Riemannian geometry.
{The approach to Theorem \ref{thm1.2} is widely applicable. In particular, one can reprove the Heintze-Karcher inequality for capillary hypersurfaces in the half-space that was established by the authors together with Jia and Zhang in \cite{JWXZ22}. We will make a brief comment in Section \ref{4.3}. We hope the approach could be applied to other capillary boundary problems. }
  
It would be an interesting question to ask whether one can use a Reilly type formula to prove 
Theorem \ref{thm1.2}. With a usual Reilly type formula one can only prove \eqref{EQ-HK-halfball} mentioned above, which is weaker than \eqref{eq_a1}, at least in the convex case. We expect that one needs to obtain a Reilly formula by  using a Finsler Laplacian operator with respect to the Randers metric we use in this paper,
instead the ordinary Laplacian  operator w.r.t a Riemannian metric.


Let us end the introduction by briefly introducing Zermelo's navigation problem. In \cite{Zer31} Zermelo asked: Suppose a ship sails on a calm sea with a mild breeze. How the captain steers the ship to reach a destination in the shortest time. He answered the question if the sea is  $\R^2$, with the Euclidean metric. When the sea is a general Riemannian manifold $(M, g)$ with a time-independent wind $v_0$, Shen \cite{Shen03} gave an answer by using the Randers metric
\[ F (x,y)=\sqrt{\a(x,y)}+\beta(x,y)= \lambda^{-1}\sqrt{\lambda g(y,y)+g(v_0, y)^2} -\lambda^{-1} g(v_0, y), \quad (x, y)\in TM\]
where $\lambda =1-g(v_0,v_0)$.
In fact, he showed that the paths minimizing travel time are exactly the geodesics of this Randers metric. $F$ is called the Finsler metric generated by the navigation data $(g, v_0)$. We refer to \cite{BRS04} for more information and references therein. 
Zermelo's navigation problem has  many applications not only in mathematics, but also in natural sciences.  Here we just mention a few,   \cite{GHWW09} in stationary spacetimes, 
\cite{GW10} for  a model for traffic noise with  magnetic flow,
 \cite{B3LG19} for 2D turbulent flows,
 \cite{D22} in wildfire spread modelling, and  \cite{RS15, Brody15}
in the quantum Zermelo problem \cite{RS15, Brody15}.
Our result is a nice geometric application of it.

The rest of the paper is organized as follows. 
In Section 2, we consider the special case $\theta_0=\frac{\pi}{2}$ and prove Theorem A by using geodesic normal flow with respect to hyperbolic metric. In Section 3, we review fundamental facts about Zermelo's navigation and Finsler metric of Randers-type. In Section 4, we make a detailed description for the relationship between capillary hypersurfaces and Zermelo's navigation and then prove Theorem \ref{thm1.2}.

\

\section{Free boundary hypersurfaces in \texorpdfstring{$\mathbb{B}^{n+1}_+$}{Lg}}


In this section, as a warm-up, we consider the case $\theta_0=\pi/2$. As mentioned  in the introduction, Theorem A was proved in \cite{WX19} by using a Reilly-type formula due to Qiu-Xia \cite{QX15}. Here we provide another proof  by adapting Brendle's idea to hypersurfaces with boundary.

Let $\theta_0\in (0,\pi)$. Let $\mathbb{B}^{n+1}\cap \rr^{n+1}_+\subset\rr^{n+1}_+$ be the open half ball. {We denote $\bar N$ be the outward Euclidean unit normal to $\p \mathbb{B}^{n+1}$, that is, $\bar N(p)=p$ for $p\in \p \mathbb{B}^{n+1}$.  }

Recall $g:=g_h=x_{n+1}^{-2}\delta$ be the hyperbolic metric on $\rr^{n+1}_+$.
For each $p\in \S$,  let $\nu_\delta$ and $\nu_g$ be the outward unit normal to $\S$ with respect to $\delta$ and $g$ respectively.  Let $\exp_p^g$  the exponential map at $p$ with respect to $g$, and $\Phi$ be the normal exponential map from $\S$ given by\begin{equation}
\begin{array}{rl}\vspace{2mm}\Phi: &\S\times [0,\infty)\to \rr^{n+1}_+,\\
&\Phi(p, t)=\exp_p^g(-t\nu_g(p)).\end{array} \label{geo1}
\end{equation}
Let $d_g(\S, q)$ be the distance function of $q\in  \rr^{n+1}_+$ from $\S$ and $c: \S\to [0,\infty)$ be the cut function of $\S$, that is, $$c(p)=\sup\{t\in [0, \infty): d_g(\S, \Phi(p, t))=t\}.$$ Denote $T=\max\limits_{p\in \S} c(p)$.
We define
\begin{eqnarray*}
&A=\{(p, t)\in \S\times [0, T): d_g(\S, \Phi(p, t))=t\},\\
&A^*=\{(p, t)\in \S\times [0, T): (p, t+\delta)\in A\hbox{ for some }\delta>0\}.
\end{eqnarray*}
It is known that $A^*$ is an open set of $\S\times [0, T)$ and the restriction $\Phi|_{A^*}$ is a diffeomorphism.
$\Phi(A)$ is a closed set and $\Phi(A\setminus A^*)$ is the $g$-cut locus ${\rm Cut}_g(\S)$, which is of ${\mathcal{H} ^{n+1}}$-measure zero, see for example \cite{IT01}.

For each $t\in [0, T)$, define $$\S_t^*=\Phi(A^*\cap (\S\times\{t\})),$$
which is in fact a parallel hypersurface of $\Sigma$.
Then $\S_t^*$ is a smooth hypersurface contained in $\{d_g(\S, \cdot)=t\}$. Let $\nu_\delta(\cdot, t)$ and $\nu_g(\cdot, t)$ be the outward $\delta$-unit normal and $g$-unit normal  to $\S_t^*$ respectively.  
It is clear that $\S_t^*$ solves the flow equation 
\begin{equation}\label{conf_flow}\p_t \Phi(p, t) =-\nu_g(p, t)=-x_{n+1}\nu_\delta(p, t), \end{equation}
which is a unit-speed normal flow w.r.t. the  metric $g$, or a conformal normal flow. This is the idea of Brendle for closed hypersurfaces in a warped product in \cite{Br13}. In our case, we need to
pay extra attention on the boundary.  We first prove that $\Phi$ is surjective.
\begin{lemma} $\O\subset \Phi(A)$.
\end{lemma}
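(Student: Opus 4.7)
The plan is to show that for each $q\in\Omega$, the hyperbolic distance $d_g(\Sigma,q)$ is realized by a $g_h$-geodesic that emanates perpendicularly from some $p\in\Sigma$ and lies entirely in $\overline{\Omega}$; this immediately gives $q=\Phi(p,d_g(\Sigma,q))\in\Phi(A)$. The argument splits into two main tasks: producing an ambient minimizer, and confining that minimizer to $\overline{\Omega}$.

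For existence, I would use that $(\rr^{n+1}_+,g_h)$ is the complete hyperbolic space $\hh^{n+1}$, so since $\Sigma$ is compact, Hopf--Rinow yields a unit-speed $g$-geodesic $\gamma\colon[0,L]\to\rr^{n+1}_+$ with $\gamma(0)=p\in\Sigma$, $\gamma(L)=q$ and $L=d_g(\Sigma,q)$. Minimality and the first-variation formula for distance from a compact hypersurface force $\gamma'(0)\perp_g T_p\Sigma$, and because $\gamma$ must initially enter the interior of $\Omega$, we have $\gamma'(0)=-\nu_g(p)$, so $\gamma(t)=\exp_p^g(-t\nu_g(p))=\Phi(p,t)$.

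The crucial step is confinement of $\gamma$ to $\overline{\Omega}$. Two structural facts feed in: (i) $\mathbb{S}^n_+$ is a Euclidean hemisphere centered on the ideal boundary $\{x_{n+1}=0\}$ and is therefore a totally geodesic hypersurface in $(\rr^{n+1}_+,g_h)$; moreover the Euclidean inversion $R(x)=x/|x|^2$ is a $g_h$-isometry with $R|_{\mathbb{S}^n_+}=\mathrm{id}$; (ii) orthogonality is conformally invariant, so the free-boundary condition $\theta_0=\pi/2$ gives $\Sigma\perp_{g_h}\mathbb{S}^n_+$, making $\tilde\Sigma:=\Sigma\cup R(\Sigma)$ a $C^1$ closed hypersurface. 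Suppose for contradiction that $\gamma$ leaves $\overline{\Omega}$; since $q\in\Omega$, it must cross $\mathbb{S}^n_+$ transversally at some $t_1$ and re-enter at a later $t_2$. Replacing $\gamma|_{[t_1,t_2]}$ by $R\circ\gamma|_{[t_1,t_2]}$ yields a curve of identical $g$-length lying in $\overline{\Omega}$, hence a new length-minimizer between $p$ and $q$. However, this curve has a corner at $\gamma(t_1)$ (the outgoing tangent is the reflection of the incoming one across $T_{\gamma(t_1)}\mathbb{S}^n_+$), and any corner on a length-minimizer can be locally shortened --- contradiction. The only alternative is $\gamma$ being tangent to $\mathbb{S}^n_+$ at $t_1$, in which case total geodesicity traps $\gamma\subset\mathbb{S}^n_+$ thereafter, contradicting $q\notin\mathbb{S}^n_+$. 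Thus $\gamma\subset\overline{\Omega}$ and $q\in\Phi(A)$.

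The main obstacle is executing the corner-shortening rigorously at the reflection points. Since $\tilde\Sigma$ is only $C^1$ along $\p\Sigma$, one must ensure that the crossings $\gamma(t_1),\gamma(t_2)$ occur in the smooth interior of $\mathbb{S}^n_+\cap\overline{\Omega}$; I would handle this by observing that $\gamma$ is a smooth ambient hyperbolic geodesic which can meet the corner set $\p\Sigma$ only tangentially (otherwise it would leave hyperbolic space), and then applying the standard local shortening in the open smooth region, where the ambient metric is $C^\infty$.
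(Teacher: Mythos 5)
Your proposal has a genuine gap at its very first substantive step, and interestingly the elaborate reflection/confinement argument you build is not where the real difficulty lies.

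You write that ``minimality and the first-variation formula for distance from a compact hypersurface force $\gamma'(0)\perp_g T_p\Sigma$.'' This is only valid if the nearest point $p$ lies in the \emph{interior} of $\Sigma$. Since $\Sigma$ is a compact hypersurface \emph{with boundary}, the minimizer of $d_g(\cdot,q)|_\Sigma$ may a priori occur at $p\in\partial\Sigma$, and in that case first variation only yields the one-sided condition that the tangential (to $\Sigma$) component of $\gamma'(0)$ is a nonnegative multiple of the inward conormal $-\mu$; it does not force $\gamma'(0)=-\nu_g(p)$. If $\gamma'(0)\neq-\nu_g(p)$ at a boundary minimizer, then $q\neq\Phi(p,t_0)$, and your conclusion $q\in\Phi(A)$ simply does not follow. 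This is precisely the case the paper's proof is designed to exclude: it supposes $p\in\partial\Sigma$, compares $\nu_\delta(p)$ with the normal $\nu_\delta^B(p)$ of the tangent geodesic ball through the angle against $\bar N(p)$, uses that hyperbolic geodesics into the ball $\mathbb{B}^{n+1}$ must have $\langle-\nu_\delta^B,\bar N\rangle<0$, and derives a contradiction with the hypothesis $\theta(p)\le\pi/2$. Your argument never invokes the contact-angle hypothesis $\theta\le\pi/2$ at all, which is a clear warning sign: without it the lemma is false (think of a cap meeting $\mathbb{S}^n_+$ at an obtuse angle, leaving an unreached wedge of $\Omega$ near $\partial\Sigma$).

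Once one knows that $\gamma'(0)=-\nu_g(p)$ and $L=d_g(\Sigma,q)$, the membership $(p,L)\in A$ and $q=\Phi(p,L)\in\Phi(A)$ follow immediately from the definitions; the confinement of $\gamma$ to $\overline\Omega$ is not actually required for the lemma. So the reflection/corner-shortening machinery you develop, while a reasonable geometric idea in its own right (and roughly correct up to the admitted $C^1$-corner technicalities), is aimed at a side issue. If you wanted to push the reflection idea into a real proof, the natural move would be to double the surface to $\tilde\Sigma=\Sigma\cup R(\Sigma)$ and argue that the nearest point of $\tilde\Sigma$ to $q$ lies on $\Sigma$ and that first variation on the $C^1$ hypersurface $\tilde\Sigma$ then gives $\gamma'(0)\perp T_p\tilde\Sigma=T_p\Sigma$ even for $p\in\partial\Sigma$; but establishing that the nearest point of the doubled surface lies on the $\Sigma$-half, and making the $C^1$ first-variation rigorous, is exactly where the contact-angle hypothesis and the convexity of the ball would have to enter --- work that your write-up does not do, and which the paper accomplishes directly by the normal comparison at a putative boundary minimizer.
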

\begin{proof} For each $q\in \O$, let $p\in \S$ be the point such that $t_0:=d_g(p, q)=d_g(\S, q)$.
We shall prove that $p\in {\rm int}(\S)$. If this is the case, then it is clear that $\Phi(p, t_0)=q$.

Suppose by contradiction that $p\in \p \S$. Let $B_g(q, t_0)$ be the $g$-geodesic ball of radius $t_0$ centered at $q$. Denote $\nu_\delta^B$ and $\nu_g^B$ be the outward $\delta$-unit normal and $g$-unit normal to  $\p B_g(q, t_0)$ respectively. It is clear that $\nu_g^B=x_{n+1}\nu_\delta^B$. Since $d_g(p, q)=d_g(\S, q)$, we find that
 $$g(\nu_g^B(p), \tau)=0, \quad \forall\tau\in T_p(\p \S).$$ which means $\nu_g^B(p)\in N^\delta_p(\p\S):=\{v\in T_p \rr^{n+1}: \<v, w\>=0 \hbox{ for }w\in T_p(\p \S)\}$. It is clear that $\nu_\delta(p)$ is also in $ N^\delta_p(\p\S)$ and $\bar N(p)\in N^\delta_p(\p\S)$. Hence it follows again from $d_g(p, q)=d_g(\S, q)$ that
$$\<\nu_\delta, \bar N\>(p)\ge \<\nu_\delta^B, \bar N\>(p).$$
Recall that in the hyperbolic Poincare model $(\rr^{n+1}_+, g)$, the $g$-geodesic is a semi-circle (that is orthogonal to $\p \rr^{n+1}_+$). Hence $-\nu_g^B$, which is the tangential vector of the $g$-geodesic from $p$ to $q$, points inwards $\O$, since $q\in \O\subset \mathbb{B}^{n+1}$. It follows that $-\nu_\delta^B$ points inwards $\O$, which implies
$$\<-\nu_\delta^B, \bar N\>(p)<0.$$
However, $\<\nu_\delta, \bar N\>(p)=-\cos\theta(x)\le 0$ by our assumption. This leads to a contradiction.  
\end{proof}

\begin{lemma}\label{xlem}
 If the mean curvature of $\S$ is positive, Then the mean curvature $H(p, t)$ of $\S_t^*$ remains positive and satisfies
\begin{eqnarray}\label{mean-curv-evol}
\frac{\p}{\p t}\frac{x_{n+1}}{H}\le -\frac{1}{n}x_{n+1}^2.
\end{eqnarray}
\end{lemma}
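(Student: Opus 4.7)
The plan is to exploit the fact that the flow \eqref{conf_flow}, read in the Euclidean background, is simply an inward normal flow with prescribed variable speed $f = x_{n+1}$. This reduces everything to two elementary first-variation calculations whose non-trivial cross terms will cancel when combined into the quotient $x_{n+1}/H$.

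Concretely, the first step is to record the two evolutions: directly from \eqref{conf_flow}, $\p_t x_{n+1} = -x_{n+1}\langle \nu_\delta, E_{n+1}\rangle$; and from the standard first-variation formula for the Euclidean mean curvature under an inward normal flow with scalar speed $f$, $\p_t H = \Delta^\Sigma f + f|A|^2$. The second step is to compute $\Delta^\Sigma x_{n+1}$: since $x_{n+1}$ is linear on $\R^{n+1}$ its ambient Hessian vanishes, and a routine trace computation gives $\Delta^\Sigma x_{n+1} = -H\langle \nu_\delta, E_{n+1}\rangle$. Substituting these into the logarithmic derivative of $x_{n+1}/H$, the terms containing $\langle \nu_\delta, E_{n+1}\rangle$ cancel, producing the clean identity
\[
\p_t \frac{x_{n+1}}{H} = -\frac{x_{n+1}^{\,2}\,|A|^2}{H^2}.
\]
The inequality \eqref{mean-curv-evol} then follows immediately from the Cauchy--Schwarz bound $|A|^2 \ge H^2/n$, with equality iff $\Sigma_t^*$ is totally umbilic.

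For the assertion that $H$ remains positive along the flow, the strategy is a continuity argument: the identity above shows that $t \mapsto x_{n+1}/H$ is monotone non-increasing on $A^*$ and hence uniformly bounded above by its value at $t=0$. Since $x_{n+1} > 0$ strictly inside $\overline{\mathbb{B}^{n+1}} \cap \R^{n+1}_+$, this bound prevents $H$ from degenerating to zero at any interior parameter $t$. I do not expect any real obstacle in this lemma; the whole argument is a local computation on the regular set $A^*$, and the only delicate point is the cancellation producing the sharp identity above. The lemma's purpose is precisely to set up the monotone quantity that, after time integration and careful control of the boundary contributions in the next step, will yield Theorem A.
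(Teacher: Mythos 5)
Your proposal is correct and follows essentially the same route as the paper: both compute $\p_t x_{n+1}$ and $\p_t H$ under the conformal flow \eqref{conf_flow}, observe the cancellation of the $\langle\nu_\delta,E_{n+1}\rangle$ terms, and apply $|h|^2\ge H^2/n$. The only cosmetic difference is that the paper tracks the non-decreasing quantity $H/x_{n+1}$, which makes the positivity of $H$ immediate from $x_{n+1}>0$, whereas you track $x_{n+1}/H$ and argue positivity by a continuity argument --- both are valid.
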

\begin{proof}
The mean curvature $H$ satisfies the evolution equation:
\begin{eqnarray*}
\frac{\p}{\p t}H=\Delta_{\S_t^*}x_{n+1}+|h|^2x_{n+1}\ge -H\<\nu_\delta, E_{n+1}\>+\frac{H^2}{n}x_{n+1},
\end{eqnarray*}
where $h$ denotes the second fundamental form of  $\S_t^*$.
One can also check easily that $x_{n+1}$ satisfies the evolution equation:
\begin{eqnarray*}
\frac{\p}{\p t}x_{n+1}=-x_{n+1}\<\nu_\delta, E_{n+1}\>.
\end{eqnarray*}
It follows that
\begin{eqnarray*}
\frac{\p}{\p t}\frac{H}{x_{n+1}}\ge \frac{H^2}{n}.
\end{eqnarray*}
Hence $H$ remains positive and \eqref{mean-curv-evol} follows.
\end{proof}
\begin{lemma}
\begin{eqnarray*}\label{mean-curv-evol1}
\frac{d}{d t}\int_{\S_t^*}\frac{x_{n+1}}{H}\le -\int_{\S_t^*}\frac{n+1}{n}x_{n+1}^2.
\end{eqnarray*}
\end{lemma}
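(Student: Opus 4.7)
The plan is to combine the pointwise derivative estimate from Lemma \ref{xlem} with the evolution of the Euclidean area element along the conformal normal flow \eqref{conf_flow}. Since $\p_t\Phi=-\nu_g=-x_{n+1}\nu_\delta$, the flow is purely normal in the Euclidean sense with signed speed $V=-x_{n+1}$ relative to the outward unit normal $\nu_\delta$, and the standard first variation formula $\p_tg^{\S_t^*}_{ij}=2Vh_{ij}$ yields the area-element evolution
\[
\p_t(dA)=HV\,dA=-Hx_{n+1}\,dA,
\]
where $H$ is the Euclidean mean curvature of $\S_t^*$ with respect to $\nu_\delta$.

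Combining this with $\p_t(x_{n+1}/H)\le -x_{n+1}^2/n$ from Lemma \ref{xlem} gives the pointwise inequality
\[
\p_t\!\left(\frac{x_{n+1}}{H}\,dA\right)=\p_t\!\left(\frac{x_{n+1}}{H}\right)dA+\frac{x_{n+1}}{H}\,\p_t(dA)\le -\frac{n+1}{n}x_{n+1}^2\,dA.
\]
Integrating over $\S_t^*$ and passing the time derivative through the integral via a Reynolds transport argument then yields the claim. Because the flow is purely normal, there is no tangential boundary flux; the parametrization domain $A^*_t\subset\S$ loses points only through the cut locus, which has $\mathcal H^{n+1}$-measure zero in $\O$; and the geometric boundary $\p\S_t^*$ stays on $\p\mathbb{B}^{n+1}$ since $\p\mathbb{B}^{n+1}\cap\rr^{n+1}_+$ is totally geodesic for $g_h$ (being a hemisphere meeting $\{x_{n+1}=0\}$ orthogonally) and $\S$ has free boundary.

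The main mildly technical obstacle I anticipate is the justification of passing $d/dt$ through the integral in the presence of the moving domain $A^*_t$ and of the geometric boundary component on $\p\mathbb{B}^{n+1}$. As noted, the purely normal character of the flow eliminates any tangential flux, and the cut-locus contribution is of measure zero, so the pointwise inequality propagates to the integral without correction; this is entirely in the spirit of the classical Heintze-Karcher argument adapted to the free boundary setting.
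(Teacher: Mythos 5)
Your core computation is exactly the paper's proof: combine the pointwise estimate $\p_t(x_{n+1}/H)\le -x_{n+1}^2/n$ from Lemma \ref{xlem} with the area-element evolution $\p_t(dA)=-Hx_{n+1}\,dA$ under the inward conformal normal flow, then integrate. One small caution: your parenthetical justification for the boundary behaviour is off -- Theorem~A only assumes $\theta(x)\le\pi/2$, not free boundary, and in general $\p\S_t^*$ is controlled by both $\p\mathbb{B}^{n+1}$ and the cut locus; what actually makes the domain shrinkage harmless is simply that the integrand $x_{n+1}/H$ is nonnegative, so removing points from $A^*_t$ can only decrease $Q(t)$ and hence strengthens the inequality $Q(0)-Q(\tau)\ge\frac{n+1}{n}\int_0^\tau\int_{\S_t^*}x_{n+1}^2$.
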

\begin{proof} From \eqref{mean-curv-evol}, we have
\begin{eqnarray*}
\frac{d}{d t}\int_{\S_t^*}\frac{x_{n+1}}{H}\le \int_{\S_t^*} \left(-\frac{1}{n}x_{n+1}^2\right)+\frac{x_{n+1}}{H} (-Hx_{n+1})=- \int_{\S_t^*}\frac{n+1}{n}x_{n+1}^2.\end{eqnarray*}
\end{proof}

Now we can prove Theorem A.

\

\noindent
{\it Proof of Theorem A.}
Let $$Q(t)=\int_{\S_t^*} \frac{x_{n+1}}{H}.$$
It follows that for $\tau\in (0, T)$,
\begin{eqnarray*}
Q(0)-Q(\tau)&= &-\int_0^\tau \left( \frac{d}{d t}\int_{\S_t^*}\frac{x_{n+1}}{H}\right) dt\\&\ge& \frac{n+1}{n} \int_0^\tau \int_{\S_t^*} x_{n+1}^2=\frac{n+1}n \int_{\Phi(A^*\cap (\S\times [0, \tau]))} x_{n+1}.
\end{eqnarray*}
Letting $\tau\to T$ and recalling that $\O\subset \Phi(A)$ and $A\setminus A^*$ has measure zero, we conclude
\begin{eqnarray*}
Q(0)=\int_{\S} \frac{x_{n+1}}{H}\ge\frac{n+1}{n}  \int_{\O} x_{n+1}.
\end{eqnarray*}
If equality holds, then by the proof of Lemma \ref{xlem}, we see that for all $t\in[0, T)$, $\S_t^*$ is umbilical. In particular, $\S=\S_0^*$ is umbilical, which is a spherical cap. It is easy to check that equality holds only if it is a free boundary spherical cap.\qed

\begin{remark}
In the proof given above we have  used the geodesics with respect to the hyperbolic metric $g$, a Riemannian metric.     For the capillary hypersurfaces studied in the next section, we will use geodesics with respect to a Finsler metric $F$ given in \eqref{finsler}. It indicates that the difference between the geometry of free boundary hypersurfaces and the one of capillary hypersurfaces could be big. One belongs to Riemannian geometry, while the other Finslerian geometry.

\end{remark}

\

\section{Zermelo's navigation and Finsler metrics}
We shall give a brief introduction of Zermelo's navigation problem and the induced Finsler metric of Randers-type, as well as some properties we shall use. We refer to \cite{BRS04} for a detailed description on Zermelo's navigation problem. For Finsler geometry and Randers metrics see for example \cite{BCS00} and \cite{CS12}.

Let $\mathbb{R}^{n+1}$ be equipped with a Riemannian metric $g$ and ${\bf v_0}$  a Euclidean constant vector field on $\mathbb{R}^{n+1}$ such that $|{\bf v_0}|_g<1$. 
Zermelo's navigation problem asks to find a Finsler metric $F$ on $\mathbb{R}^{n+1}$ such that for any non-zero vector  $\xi\in T_x\mathbb{R}^{n+1}$,  \begin{eqnarray}\label{navi1}
\left|\frac{\xi}{F(x, \xi)}+{\bf v_0}\right|_{g_x}=1.
\end{eqnarray}
We call $F$ the navigation Finsler metric with respect to $(g, {\bf v_0})$.
It turns out that $F$ is given by
\begin{eqnarray}\label{finsler}
F(x, \xi)=\sqrt{\frac{|\xi|_{g_x}^2}{1-|{\bf v_0}|_{g_x}^2}+\frac{g_x(\xi, {\bf v_0})^2}{(1-|{\bf v_0}|_{g_x}^2)^2}}+\frac{g_x(\xi, {\bf v_0})}{1-|{\bf v_0}|_{g_x}^2}.
\end{eqnarray}
Indeed, \eqref{navi1} is equivalent to $$(1-|{\bf v_0}|_{g_x}^2)F^2(x,\xi)-2{g_x}(\xi, {\bf v_0})F(x, \xi)-|\xi|_{g_x}^2=0.$$ Then \eqref{finsler} follows.

$F$ is in fact a Randers metric, which is a very special  Finsler metric of the form $$F(x,\xi)=\sqrt{\alpha_x(\xi,\xi)}+\beta_x(\xi),$$
where 
\begin{equation}
\label{alpha}\alpha_x=\frac{g_x(\cdot, \cdot)}{1-|{\bf v_0}|_{g_x}^2}+\frac{g_x(\cdot, {\bf v_0})^2}{(1-|{\bf v_0}|_{g_x}^2)^2}\end{equation} gives a Riemannian metric and 
$$\beta_x(\cdot)=\frac{g_x(\cdot, {\bf v_0})}{1-|{\bf v_0}|_{g_x}^2}$$ gives a $1$-form. Denote
\begin{eqnarray}
g_{\xi}^F(X, Y)=\frac{\p^2}{\p \xi_i\p\xi_j}(\frac12F^2)(x, \xi)X^iY^j,\quad \hbox{ for }X, Y\in T_x \mathbb{R}^{
n+1}.
\end{eqnarray}

We have the following properties for such $F$.
\begin{lemma}\label{finsler-prop1}
For $V\in T_x\mathbb{R}^{n+1}$, we have $$F(x, V-{\bf v_0})=1 \Leftrightarrow |V|_g=1.$$
\end{lemma}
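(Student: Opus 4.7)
The plan is to derive both directions directly from the defining navigation equation \eqref{navi1}, which states that for any nonzero $\xi \in T_x \mathbb{R}^{n+1}$,
$$\left|\frac{\xi}{F(x,\xi)} + \mathbf{v_0}\right|_{g_x} = 1.$$
This, as the author already noted, is equivalent to the quadratic
$$(1-|\mathbf{v_0}|_{g_x}^2) F^2(x,\xi) - 2 g_x(\xi, \mathbf{v_0}) F(x,\xi) - |\xi|_{g_x}^2 = 0, \qquad (\star)$$
and the explicit formula \eqref{finsler} corresponds to choosing the positive root.

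For the forward implication, assuming $F(x, V - \mathbf{v_0}) = 1$, I would simply substitute $\xi = V - \mathbf{v_0}$ into \eqref{navi1}. Then $\xi / F(x,\xi) + \mathbf{v_0} = (V - \mathbf{v_0})/1 + \mathbf{v_0} = V$, so \eqref{navi1} immediately gives $|V|_{g_x} = 1$. (One should first note that $V \neq \mathbf{v_0}$, for otherwise $F(x,0) = 0 \neq 1$, so $\xi \neq 0$ and \eqref{navi1} is applicable.)

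For the reverse implication, assume $|V|_{g_x} = 1$. Since by hypothesis $|\mathbf{v_0}|_{g_x} < 1$, we have $V \neq \mathbf{v_0}$ and thus $\xi := V - \mathbf{v_0} \neq 0$, so $F(x,\xi)$ is defined and \emph{positive}. Substitute this $\xi$ into $(\star)$ and use
$$g_x(V - \mathbf{v_0}, \mathbf{v_0}) = g_x(V, \mathbf{v_0}) - |\mathbf{v_0}|_{g_x}^2, \qquad |V-\mathbf{v_0}|_{g_x}^2 = 1 - 2 g_x(V, \mathbf{v_0}) + |\mathbf{v_0}|_{g_x}^2,$$
then verify by direct substitution that the value $1$ solves $(\star)$. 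The product of the two roots of $(\star)$ (viewed as a quadratic in $F$) is $-|\xi|_{g_x}^2/(1-|\mathbf{v_0}|_{g_x}^2) \leq 0$, so the other root is non-positive; since $F(x,\xi) > 0$, it must equal $1$.

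There is really no main obstacle here: the claim is a direct algebraic consequence of the defining relation \eqref{navi1}. The only subtlety is to ensure nondegeneracy, i.e.\ $V \neq \mathbf{v_0}$, which is automatic from $|V|_{g_x} = 1 > |\mathbf{v_0}|_{g_x}$, and to use the positivity of $F$ to single out the correct root of $(\star)$.
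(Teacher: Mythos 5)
Your proof is correct and follows essentially the same algebraic route as the paper: the forward direction is identical (substitute $\xi = V - \mathbf{v_0}$ into \eqref{navi1}), and the reverse direction is again a direct substitution into the quadratic form of the defining relation. The only genuine difference is in how you single out the root: the paper expands the explicit formula \eqref{finsler} to check $F(x,\zeta)=1$, while you instead observe that $1$ satisfies the quadratic $(\star)$ and that the other root of $(\star)$ is non-positive (product of roots $-|\xi|_g^2/(1-|\mathbf{v_0}|_g^2)\le 0$), so positivity of $F$ forces $F(x,\zeta)=1$. This is a slightly cleaner way to handle the sign ambiguity that the paper's chain of equivalences passes over silently (the squared equation $A=(1-B)^2$ only recovers $F=1$ when $1-B\ge 0$, which does hold here by Cauchy--Schwarz but is not remarked upon); otherwise the two arguments are the same.
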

\begin{proof}If $V$ satisfies $F(x, V-{\bf v_0})=1$, then from \eqref{navi1} we see $$ |V|_g=\left|\frac{V-{\bf v_0}}{F(x, V-{\bf v_0})}+{\bf v_0}\right|_g=1.$$
Conversely, if $ |V|_g=1$, we let $\zeta=V-{\bf v_0}$. Then
\begin{eqnarray}\label{navi2}
1=|\zeta+{\bf v_0}|_g^2=|\zeta|_g^2+2g(\zeta, {\bf v_0})+|{\bf v_0}|_g^2.
\end{eqnarray}
On the other hand, by the definition \eqref{finsler} of $F$, we check that for any $\xi$,
\begin{eqnarray*}
F(x,\xi)=1&&\Leftrightarrow \frac{|\xi|_{g_x}^2}{1-|{\bf v_0}|_{g_x}^2}+\frac{g_x(\xi, {\bf v_0})^2}{(1-|{\bf v_0}|_{g_x}^2)^2}=\left(1-\frac{g_x(\xi, {\bf v_0})}{1-|{\bf v_0}|_{g_x}^2}\right)^2 \\&&\Leftrightarrow  \frac{|\xi|_{g_x}^2}{1-|{\bf v_0}|_{g_x}^2}=1- 2\frac{g_x(\xi, {\bf v_0})}{1-|{\bf v_0}|_{g_x}^2}.
\end{eqnarray*}
Hence it follows from \eqref{navi2} that $F(x,\zeta)=1$.
\end{proof}

\begin{lemma}\label{finsler-prop2'}Let $\zeta=V- {\bf v_0}$ for some vector $V\in T_x\mathbb{R}^{n+1}$ with $|V|_g=1$. Then for any $W\in T_x\mathbb{R}^{n+1}$,
\begin{eqnarray}\label{finsler1}
&&g^F_{\zeta}(\zeta, W)= \frac{1}{1-g(\zeta+{\bf v_0}, {\bf v_0})} g(\zeta+{\bf v_0}, W).
\end{eqnarray}
\end{lemma}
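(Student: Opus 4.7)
The plan is to exploit the 1-homogeneity of $F$ in the fiber variable, which reduces $g^F_\zeta(\zeta, \cdot)$ to a first-order quantity. Since $F^2$ is 2-homogeneous, Euler's identity applied to $\partial_{\xi^j}(F^2/2)$ gives
\begin{equation*}
g^F_\xi(\xi, W) \;=\; \tfrac{1}{2}\, \partial_{\xi^j}(F^2)(x, \xi)\, W^j,
\end{equation*}
and a direct differentiation of $F = \sqrt{\alpha(\xi,\xi)} + \beta(\xi)$ then yields the Randers-type identity
\begin{equation*}
g^F_\xi(\xi, W) \;=\; F(x,\xi)\left[\frac{\alpha(\xi, W)}{\sqrt{\alpha(\xi,\xi)}} + \beta(W)\right].
\end{equation*}

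Next, I would specialize to $\xi = \zeta = V - {\bf v_0}$. By Lemma \ref{finsler-prop1} one has $F(x, \zeta) = 1$, hence $\sqrt{\alpha(\zeta, \zeta)} = 1 - \beta(\zeta)$. Using $\beta(\zeta) = g(\zeta, {\bf v_0})/(1 - |{\bf v_0}|^2_g)$ together with the identity $g(\zeta, {\bf v_0}) = g(V, {\bf v_0}) - |{\bf v_0}|^2_g$, this simplifies to
\begin{equation*}
\sqrt{\alpha(\zeta, \zeta)} \;=\; \frac{1 - g(V, {\bf v_0})}{1 - |{\bf v_0}|^2_g}.
\end{equation*}

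Finally, I would substitute the explicit expression \eqref{alpha} for $\alpha(\zeta, W)$, together with $\beta(W) = g(W, {\bf v_0})/(1 - |{\bf v_0}|^2_g)$, into the Randers identity at $\xi = \zeta$ and collect the $g(W, {\bf v_0})$ terms. The coefficient of $g(W, {\bf v_0})$ becomes a rational expression whose numerator equals $g(\zeta, {\bf v_0}) + 1 - g(V, {\bf v_0}) = 1 - |{\bf v_0}|^2_g$, and this cancels the remaining factor $1 - |{\bf v_0}|^2_g$ in the denominator to produce precisely the claimed formula $g^F_\zeta(\zeta, W) = g(V, W)/(1 - g(V, {\bf v_0}))$. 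The proof is entirely algebraic, so there is no real geometric obstacle; the only issue is careful bookkeeping of the pairings against ${\bf v_0}$.
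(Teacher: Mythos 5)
Your proposal is correct and follows essentially the same route as the paper: both proofs reduce $g^F_\zeta(\zeta,W)$ to $F(x,\zeta)\,D_\xi F(x,\zeta)\,W$ via the homogeneity/Euler identity, then substitute $F(x,\zeta)=1$ from Lemma~\ref{finsler-prop1} and perform the explicit algebraic simplification. The only cosmetic difference is that you differentiate the Randers decomposition $F=\sqrt{\alpha}+\beta$ while the paper differentiates the closed-form expression \eqref{finsler} directly; these are the same computation.
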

\begin{proof}
By a direct computation, we see that
\begin{eqnarray}\label{finsler2'}
&&D_{\xi_i} F(x, \zeta)=\frac{1}{\sqrt{\frac{|\xi|_{g_x}^2}{1-|{\bf v_0}|_{g_x}^2}+\frac{g_x(\xi, {\bf v_0})^2}{(1-|{\bf v_0}|_{g_x}^2)^2}}}\frac{1}{1-|{\bf v_0}|_{g_x}^2}g_{ij}(\zeta^j+{\bf v_0}^jF(x, \zeta)).
\end{eqnarray}
By the $1$-homogeneity of $F$, we know $\sum\limits_j\frac12\frac{\p^2 F}{\p \xi_i\p\xi_j}(x, \zeta)\zeta_j=0$. Thus 
\begin{eqnarray*}
g_{\zeta}^F(\zeta, W)&=F(x, \zeta) D_{\xi_i} F(x, \zeta) W^i
\end{eqnarray*}
Using the definition of $F$, \cref{finsler-prop1} and \eqref{finsler2'}, we get the assertion.
\end{proof}

For each $x$, denote by $F_x=F(x, \cdot)$ the Minkowski gauge and $F_x^*: (\rr^{n+1})^*\to \rr_+$  the dual  Minkowski gauge  to $F_x$. Let $l_x: \rr^{n+1}\to (\rr^{n+1})^*$ be the Legendre transform given by
$$l_x(\xi)(W)=g_{\xi}^{F_x}(\xi, W),$$
which is equal to $D\frac 12 (F_x)^2,$
and $l^*: (\rr^{n+1})^*\to \rr^{n+1}$ be its dual, which is defined by
$$l_x^*=D\frac12(F_x^*)^2.$$ 
It is well-known that
\begin{eqnarray}\label{dual-mink}
F_x(\xi)= F_x^*(l(\xi)),\quad l_x^*=(l_x)^{-1}.
\end{eqnarray}

As a direct consequence of \cref{finsler-prop2'}, we get
\begin{lemma}\label{finsler-coro2} Let $\zeta=V- {\bf v_0}$ for some vector $V\in T_x\mathbb{R}^{n+1}$ with $|V|_g=1$. Then
\begin{eqnarray}
\label{finsler3}
&&l(\zeta)= \frac{1}{1-g(\zeta+{\bf v_0}, {\bf v_0})} (\zeta+{\bf v_0})^{\sharp_g},
\end{eqnarray}
where the superscript $^{\sharp_g}$ denotes the dual $1$-form with respect to $g$.
In particular, for $\zeta$ such that $F(x, \zeta)=1$ and $W\in T_x\mathbb{R}^{n+1}$
we have
\begin{eqnarray}\label{finsler4}
&&l(\zeta)(W)=0 \Leftrightarrow g(\zeta+{\bf v_0}, W)=0. 
\end{eqnarray}
\begin{eqnarray}\label{finsler5}
&&DF^*((\zeta+{\bf v_0})^{\sharp_g})= \zeta,
\end{eqnarray}
\end{lemma}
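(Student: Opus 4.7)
The plan is to recognize each of the three identities \eqref{finsler3}, \eqref{finsler4}, \eqref{finsler5} as an immediate consequence of \cref{finsler-prop2'} combined with the standard duality properties of the Legendre transform summarized in \eqref{dual-mink}. The structural work has effectively already been done in the previous lemma; what remains is bookkeeping plus one short nonvanishing argument for the scalar coefficient that appears.

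For \eqref{finsler3}, I would simply recall that by definition of the Legendre transform $l(\zeta)(W) = g^F_\zeta(\zeta, W)$, and then plug in the right-hand side of \eqref{finsler1} to read off the identification of $l(\zeta)$ with the stated $1$-form. The only point requiring attention is that the scalar coefficient $1/(1 - g(\zeta + {\bf v_0}, {\bf v_0}))$ is well-defined and positive; this I would dispatch via the Cauchy--Schwarz estimate
\[
g(\zeta + {\bf v_0}, {\bf v_0}) \le |\zeta + {\bf v_0}|_g \,|{\bf v_0}|_g,
\]
together with $|{\bf v_0}|_g < 1$ by hypothesis and $|\zeta + {\bf v_0}|_g = 1$ provided by \cref{finsler-prop1}. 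The equivalence \eqref{finsler4} is then tautological, since a nonzero scalar multiple of $g(\zeta + {\bf v_0}, W)$ vanishes precisely when $g(\zeta + {\bf v_0}, W) = 0$.

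For \eqref{finsler5} I would combine three ingredients. First, by \eqref{dual-mink} the map $l^*$ inverts $l$, so $l^*(l(\zeta)) = \zeta$. Second, differentiating $\tfrac12 (F_x^*)^2$ and using that $F_x^*$ is positively $1$-homogeneous gives $l_x^*(\eta) = F_x^*(\eta)\, DF_x^*(\eta)$ for $\eta \neq 0$; combined with $F^*(l(\zeta)) = F(x,\zeta) = 1$ (again by \eqref{dual-mink} and the normalization on $\zeta$), this yields $DF^*(l(\zeta)) = \zeta$. Third, $F^*$ being positively $1$-homogeneous forces $DF^*$ to be $0$-homogeneous, so the positive scalar factor in \eqref{finsler3} may be stripped off without changing the value of $DF^*$. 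Substituting \eqref{finsler3} into $DF^*(l(\zeta)) = \zeta$ then gives \eqref{finsler5}.

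I do not foresee a genuine obstacle: the nontrivial input, namely the structural formula \eqref{finsler1}, is already in hand from \cref{finsler-prop2'}, and the rest is a chain of one-line algebraic and homogeneity manipulations. The single ``danger point'' is the nonvanishing of the denominator $1 - g(\zeta + {\bf v_0}, {\bf v_0})$, which is settled by the Cauchy--Schwarz computation above and can be absorbed into the opening sentence of the proof.
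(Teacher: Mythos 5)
Your proposal is correct and follows essentially the same route as the paper: \eqref{finsler3} is read off directly from \cref{finsler-prop2'} via the definition of the Legendre transform, \eqref{finsler4} is an immediate corollary, and \eqref{finsler5} is obtained by combining $l^*=l^{-1}$, $F^*(l(\zeta))=1$, and the $0$-homogeneity of $DF^*$. The only difference is that you are more explicit than the paper about the positivity of the scalar $1-g(\zeta+{\bf v_0},{\bf v_0})$ (via Cauchy--Schwarz) and about why $l^*_x(l(\zeta))=DF^*_x(l(\zeta))$, both of which the paper leaves implicit.
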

\begin{proof}We need only to prove \eqref{finsler5}.
Since $F(x,\zeta)=1$, we see from \eqref{dual-mink} that $$F^*(x, l(\zeta))=F\left(x, \frac{1}{1-g(\zeta+{\bf v_0}, {\bf v_0})}(\zeta+{\bf v_0})^{\sharp_g}\right)=1.$$ 
From  \eqref{dual-mink} and \eqref{finsler3}, we have
$$l^*_x\left(\frac{1}{1-g(\zeta+{\bf v_0}, {\bf v_0})}(\zeta+{\bf v_0})^{\sharp_g}\right)=(l_x)^{-1}\left(\frac{1}{1-g(\zeta+{\bf v_0}, {\bf v_0})}(\zeta+{\bf v_0})^{\sharp_g}\right)= \zeta.$$
It follows that $$DF^*((\zeta+{\bf v_0})^{\sharp_g})=DF^*\left(\frac{1}{1-g(\zeta+{\bf v_0}, {\bf v_0})}(\zeta+{\bf v_0})^{\sharp_g}\right)=\zeta.$$
\end{proof}

\

\section{Capillary hypersurfaces and Heintze-Karcher-type inequality}

\subsection{Zermelo's navigation and capillary hypersurfaces in a ball}\

Let $g={x_{n+1}^{-2}}\delta$ be the hyperbolic metric on $\rr^{n+1}_+$ and ${\bf v_0}=\cos \theta_0 E_{n+1}$.
To ensure $|{\bf v_0}|_g<1$, we assume that $x_{n+1}>|\cos \theta_0|$. That is, we work in $$\mathbb{R}^{n+1}_{\theta_0}:= \{x_{n+1}>|\cos \theta_0|\}.$$
Let $F$ be the navigation Finsler metric on $\mathbb{R}^{n+1}_{\theta_0}$ with respect to $(g, {\bf v_0})$.
We have the following simple by important observation.
\begin{proposition}
$\beta_x$ is a closed $1$-form.
\end{proposition}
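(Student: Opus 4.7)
The plan is a direct calculation: since both the hyperbolic metric $g=x_{n+1}^{-2}\delta$ and the vector field ${\bf v_0}=\cos\theta_0\, E_{n+1}$ are explicit, $\beta_x$ has a concrete coordinate expression, and I expect it to depend only on the variable $x_{n+1}$ and the differential $dx_{n+1}$, which makes closedness immediate.

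First I would compute the two pieces entering the definition of $\beta$. For any tangent vector $\xi=\sum_i \xi^i \partial_i$ at $x\in \mathbb{R}^{n+1}_{\theta_0}$,
\[
g_x(\xi,{\bf v_0})=\frac{\cos\theta_0}{x_{n+1}^{2}}\,\xi^{n+1},\qquad |{\bf v_0}|_{g_x}^{2}=\frac{\cos^{2}\theta_0}{x_{n+1}^{2}}.
\]
Substituting into $\beta_x(\xi)=g_x(\xi,{\bf v_0})/(1-|{\bf v_0}|_{g_x}^2)$ gives
\[
\beta_x(\xi)=\frac{\cos\theta_0\,\xi^{n+1}}{x_{n+1}^{2}-\cos^{2}\theta_0},
\]
so as a $1$-form on $\mathbb{R}^{n+1}_{\theta_0}$,
\[
\beta=\frac{\cos\theta_0}{x_{n+1}^{2}-\cos^{2}\theta_0}\,dx_{n+1}.
\]
The well-definedness is guaranteed by the assumption $x_{n+1}>|\cos\theta_0|$, under which the denominator is strictly positive.

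Next I would observe that $\beta$ has the form $h(x_{n+1})\,dx_{n+1}$ for a smooth function $h$ of the single variable $x_{n+1}$, so $d\beta=h'(x_{n+1})\,dx_{n+1}\wedge dx_{n+1}=0$. In fact $\beta$ is exact: one can write $\beta=d\phi$ with
\[
\phi(x_{n+1})=\frac{1}{2}\log\left|\frac{x_{n+1}-\cos\theta_0}{x_{n+1}+\cos\theta_0}\right|,
\]
which is a slightly stronger statement but not needed for the proposition.

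There is no real obstacle here; the content of the statement is simply that the particular choice of navigation data $(g_h,{\bf v_0})$ produces a drift $1$-form that depends only on the height coordinate. This is precisely the algebraic feature that will later allow one to transfer the capillary boundary condition in the Euclidean ball into a free boundary condition for the Finsler metric $F$, since a closed $\beta$ means the Randers metric $F$ and its underlying Riemannian metric $\alpha$ share the same Levi-Civita-type geodesic structure up to a reparametrization governed by a potential.
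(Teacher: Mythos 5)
Your proposal is correct and matches the paper's approach: both compute the coordinate expression $\beta = \frac{\cos\theta_0}{x_{n+1}^2-\cos^2\theta_0}\,dx_{n+1}$ and observe that a $1$-form of the type $h(x_{n+1})\,dx_{n+1}$ is automatically closed. Your extra remark that $\beta$ is exact (with the explicit potential $\phi$) is correct and slightly stronger than what the paper records, but it is not needed.
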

\begin{proof}
Note that $$\beta_x=\beta_i(x)dx^i = \frac{x_{n+1}^{-2}}{1-x_{n+1}^{-2}\cos^2\theta}\<{\bf v_0}, E_i\>dx^i=\frac{x_{n+1}^{-2}}{1-x_{n+1}^{-2}\cos^2\theta}\cos \theta dx^{n+1}
.$$
It is direct to check that $$\frac{\p}{\p x_j}{\beta_i}=\frac{\p}{\p x_i}{\beta_j}, \quad i, j=1,\cdots, n+1.$$
The assertion follows.
\end{proof}

The following is a well-known fact in Finsler geometry, see for example the book of Bao-Chern-Shen \cite{BCS00}.
\begin{proposition}[\cite{BCS00} Exercise 11.3.4]\label{geod-same}
Let $F(x,\xi)=\sqrt{\alpha_x(\xi,\xi)}+\beta_x(\xi)$ be a Randers metric with  a Riemannian metric $\alpha$ and a one-form $\beta$. If $\beta$ is closed,
then the $F$-geodesic i.e., the geodesic with respect to the Finsler metric $F$, has the same trajectories as the $\alpha$-geodesic.
\end{proposition}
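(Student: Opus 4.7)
My plan is to exploit the fact that closedness of $\beta$ makes its contribution to the $F$-length ``topological'', so that the Euler--Lagrange equations for the $F$-length and for the $\alpha$-length have the same solutions as unparametrized curves.

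First, for any $C^1$ curve $\gamma:[a,b]\to\mathbb{R}^{n+1}$ I would split the $F$-length as
\begin{equation*}
L_F(\gamma)=\int_a^b\sqrt{\alpha_\gamma(\dot\gamma,\dot\gamma)}\,dt+\int_a^b\beta_\gamma(\dot\gamma)\,dt=L_\alpha(\gamma)+\int_\gamma\beta.
\end{equation*}
Next, I would use that $d\beta=0$ to show that $\int_\gamma\beta$ depends only on the endpoints of $\gamma$ within a fixed homotopy class: for a smooth variation $\{\gamma_s\}$ with frozen endpoints, Stokes' theorem on the swept rectangle, combined with $d\beta=0$, gives $\frac{d}{ds}\int_{\gamma_s}\beta=0$. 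Locally this is just the observation that Poincar\'e's lemma yields $\beta=df$ and hence $\int_\gamma\beta=f(\gamma(b))-f(\gamma(a))$.

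Consequently, among curves with common endpoints, $L_F-L_\alpha$ is locally constant, so the two functionals have identical critical points. Since the critical points of $L_\alpha$ with fixed endpoints are exactly the $\alpha$-geodesics, and those of $L_F$ are exactly the $F$-geodesics (both considered up to orientation-preserving reparametrization), the two families of geodesics trace out the same oriented curves in $\mathbb{R}^{n+1}$.

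The only delicate point, which I do not expect to cause real trouble, is parametrization and orientation: $F$-geodesics are parametrized by $F$-arc length while $\alpha$-geodesics by $\alpha$-arc length, and the non-reversibility of $F$ means the statement should be read for oriented trajectories. Since \cref{geod-same} concerns trajectories only, neither caveat affects the conclusion. Alternatively, one may by-pass the variational picture entirely and compute the Euler--Lagrange equations of $\int\beta(\dot\gamma)\,dt$ in coordinates: they read $(\partial_j\beta_i-\partial_i\beta_j)\dot\gamma^i=0$, which is trivially satisfied because $d\beta=0$; so the geodesic equations of $F$ and of $\alpha$ agree up to the reparametrization that switches between the two arc lengths.
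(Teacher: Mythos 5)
Your argument is correct and complete. Note that the paper does not supply its own proof of \cref{geod-same}: it only cites Exercise~11.3.4 of Bao--Chern--Shen \cite{BCS00}, so there is no internal proof to compare against. Both of the routes you sketch are standard and valid. In the variational route, the key observation that $\int_\gamma\beta$ is unchanged under endpoint-fixing homotopies follows from $d\beta=0$ either by the Poincar\'e lemma (so locally $\beta=df$ and $\int_\gamma\beta=f(\gamma(b))-f(\gamma(a))$) or by Stokes on the variation rectangle, whence $L_F-L_\alpha$ is locally constant on the space of curves joining two fixed points, and the two length functionals have identical unparametrized critical curves. The coordinate route is even more direct: the Euler--Lagrange contribution of the $\beta$-term is $\bigl(\partial_j\beta_i-\partial_i\beta_j\bigr)\dot\gamma^i=(d\beta)_{ji}\dot\gamma^i$, which vanishes identically when $\beta$ is closed, so the $F$- and $\alpha$-geodesic equations differ only by the reparametrization that exchanges $F$-arclength for $\alpha$-arclength. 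Your caveat about orientation and parametrization is exactly the right one to flag: since $F$ is nonreversible, its geodesics are oriented, but the proposition is about trajectories, so this costs nothing. In Finsler terminology, what you have verified is that a Randers metric with closed $\beta$ is \emph{projectively equivalent} to its Riemannian part $\alpha$.
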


Let $\S\subset \overline{\mathbb{B}^{n+1}}\cap \rr^{n+1}_{\theta_0}$ be a compact hypersurface with boundary $\p\S\subset \p \mathbb{B}^{n+1}$ such that $\p\S$ intersects $\p \mathbb{B}^{n+1}$ transversally with  contact angle  $\theta(x)\le \theta_0$ for each $x\in \p\S$. Assume that $\S$ is strictly mean convex. Let $\O$ be the domain enclosed by $\S$ and $\mathbb{S}^n_+$. 

Let ${\bf n}_\delta, {\bf n}_g, {\bf n}_F$ be the  $\delta$-, $g$-, $F$-unit normal to $\S$ pointing inward $\O$ respectively. In particular, 
$$|{\bf n}_g|_g=1, \quad g({\bf n}_g, W)=0\hbox{ for }W\in T\S.$$ Denote $$\zeta={\bf n}_g-{\bf v_0}.$$
 From \cref{finsler-prop1} and \cref{finsler-coro2}, we see $$F(\cdot, \zeta)=1, \quad g^F_\zeta(\zeta, W)=0 \hbox{ for }W\in T\S.$$
By the definition of ${\bf n}_F$, we know that
$$F(\cdot, {\bf n}_F)=1,\quad g^F_{{\bf n}_F}({\bf n}_F, W)=0,\quad W\in T\S.$$ It follows that \begin{eqnarray}\label{normal-rel}
{\bf n}_F=\zeta={\bf n}_g-{\bf v_0}=x_{n+1}{\bf n}_\delta-{\bf v_0}. 
\end{eqnarray}
\begin{remark} If $\S$ is a capillary hypersurface in  $\bb^{n+1}$ which intersects $\p \bb^{n+1}$ at the constant contact angle $\theta_0$, one sees $$\<x_{n+1}{\bf n}_\delta-{\bf v_0}, \bar N(x)\>=\<x_{n+1}{\bf n}_\delta-{\bf v_0}, x\>=0,$$which is equivalent to $$g^F_{{\bf n}_F}({\bf n}_F, \bar N(x))=0.$$This means that, by using the navigation Finsler metric, the capillary boundary condition for the Euclidean metric is transferred to a free boundary condition  for the navigation Finsler metric. 
\end{remark}
For $p\in \S$, let $\exp_p^F$  the exponential map at $p$ with respect to $F$, and $\Phi$ be the $F$-normal exponential map from $\S$ given by\begin{eqnarray*}
\Phi: &&\S\times [0,\infty)\to \rr^{n+1}_+,\\
&&\Phi(p, t)=\exp_p^F(t{\bf n}_F(p)).
\end{eqnarray*}
Let $d_F(\S, q)$ be the $F$-distance function  from $\S$ to $q$ and $c: \S\to [0,\infty)$  the $F$-cut function of $\S$, that is, $$c(p)=\sup\{t\in [0, \infty): d_F(\S, \Phi(p, t))=t\}.$$ Denote $T=\max\limits_{p\in \S} c(p)$.
We define
\begin{eqnarray*}
&A=\{(p, t)\in \S\times [0, T): d_F(\S, \Phi(p, t))=t\},\\
&A^*=\{(p, t)\in \S\times [0, T): (p, t+\delta)\in A\hbox{ for some }\delta>0\}.
\end{eqnarray*}
As in the  Riemannian case, it is known  that $A^*$ is an open set of $\S\times [0, T)$ and the restriction $\Phi|_{A^*}$ is a diffeomorphism.
    $\Phi(A)$ is a closed set and $\Phi(A\setminus A^*)$ is the $F$-cut locus ${\rm Cut}_F(\S)$, which is of ${\mathcal H}^{n+1}$-measure zero, see for example \cite{LN05}.
Next we prove $\Phi$ is surjective. Compare to the case $\theta_0=\frac{\pi}{2}$, a key lemma for anisotropic gauge in \cite{JWXZ23} will be essentially used.
\begin{lemma} \label{lem4.4} $\O\subset \Phi(A)$.
\end{lemma}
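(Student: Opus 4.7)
The plan is to mirror the Riemannian surjectivity argument in Section 2, replacing hyperbolic geodesics by $F$-geodesics and the explicit Poincar\'e semicircle geometry by an $F$-convexity statement for the closed ball $\overline{\mathbb{B}^{n+1}}$. Fix $q\in\Omega$ and pick $p\in\Sigma$ with $t_0:=d_F(\Sigma,q)=d_F(p,q)$. If $p$ is an interior point of $\Sigma$, the standard first variation of the $F$-distance to a hypersurface shows that the minimizing unit-speed $F$-geodesic $\gamma$ from $p$ to $q$ leaves $p$ in the inward $F$-normal direction, $\dot\gamma(0)={\bf n}_F(p)$, whence $q=\Phi(p,t_0)\in\Phi(A)$. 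So it remains to rule out $p\in\partial\Sigma$.

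Suppose for contradiction that $p\in\partial\Sigma$ and set $\nu^B:=\dot\gamma(0)$, so that $F(p,\nu^B)=1$. Variations of $p$ along $\partial\Sigma$ are admissible in both directions, hence the first variation of $F$-length yields $g^F_{\nu^B}(\nu^B,W)=0$ for every $W\in T_p(\partial\Sigma)$; by \cref{finsler-prop1} and \cref{finsler-coro2} this translates into the Riemannian orthogonality $g(\nu^B+{\bf v}_0,W)=0$. Combined with the analogous identity $g({\bf n}_g,W)=0$ coming from the $F$-normality of ${\bf n}_F={\bf n}_g-{\bf v}_0$ to all of $T_p\Sigma$, we find that $\nu^B+{\bf v}_0$ and ${\bf n}_g$ are two $g$-unit vectors lying in the $g$-orthogonal complement $V$ of $T_p(\partial\Sigma)$, a $2$-plane that also contains $\bar N(p)=p$. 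Admissible variations of $p$ from $\partial\Sigma$ into the interior of $\Sigma$ are one-sided, so they additionally produce a one-sided inequality for $g(\nu^B+{\bf v}_0,e)$, where $e\in T_p\Sigma$ denotes the inward conormal of $\partial\Sigma\subset\Sigma$.

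To close the argument I would compute the $g$-angles that $\nu^B+{\bf v}_0$ and ${\bf n}_g$ make with $\bar N(p)$ inside $V$, using the basis $({\bf n}_g,e)$. The capillary inequality $\theta(p)\le\theta_0$, combined with the conformal relation ${\bf n}_g=x_{n+1}{\bf n}_\delta$ and the identity $\langle{\bf n}_\delta,\bar N\rangle_\delta=\cos\theta(p)$, gives a quantitative lower bound on $g({\bf n}_g,\bar N(p))$. On the other hand, the $F$-convexity of $\overline{\mathbb{B}^{n+1}}$, which is the content of the key anisotropic gauge lemma from \cite{JWXZ23}, forces the minimizing $F$-geodesic $\gamma$ to enter $\mathbb{B}^{n+1}$ strictly at $p$ since $q$ lies strictly in the interior, yielding the opposite strict inequality for $g(\nu^B+{\bf v}_0,\bar N(p))$. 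The two bounds, together with the one-sided constraint on $g(\nu^B+{\bf v}_0,e)$, are incompatible in the $2$-plane $V$, and the contradiction excludes $p\in\partial\Sigma$.

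The main obstacle is precisely this $F$-convexity step. In Section 2 the hyperbolic geodesics are explicit semicircles, so that the inward-pointing direction of the tangent vector $-\nu_g^B$ at $p\in\partial\mathbb{B}^{n+1}$ can be read off directly, and the Riemannian comparison closes with a few lines of elementary trigonometry. Here the $F$-geodesics are only implicitly defined via their Euler--Lagrange equations, and we have no explicit parametrization at our disposal, as already emphasized in the Introduction. Bypassing this difficulty is the purpose of the anisotropic gauge key lemma of \cite{JWXZ23}: it delivers the Finslerian substitute for the semicircle argument, establishing the $F$-convexity of $\overline{\mathbb{B}^{n+1}}$ with respect to the Randers metric coming from the navigation data $(g_h,{\bf v}_0)$ in a form tailored to the capillary boundary framework.
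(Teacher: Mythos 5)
Your overall strategy is correct and parallels the paper: assume the minimizing foot point $p$ lies on $\partial\Sigma$, compare normal directions, and derive a contradiction with the capillary condition $\theta\le\theta_0$. However, there is a genuine gap at the step you yourself identify as the main obstacle, and the way you propose to fill it is wrong. You write that the $F$-convexity of $\overline{\mathbb{B}^{n+1}}$ ``is the content of the key anisotropic gauge lemma from \cite{JWXZ23}.'' It is not. Proposition 3.1 of \cite{JWXZ23} is a pointwise monotonicity statement for the map $DF^*$ restricted to a two-dimensional slice: given two unit covectors $v_1,v_2$ and a fixed vector $w$ with $\langle v_1,w\rangle\le\langle v_2,w\rangle$, it compares $\langle DF^*(v_1),w\rangle$ and $\langle DF^*(v_2),w\rangle$. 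In the paper's proof it is invoked only to pass from $\langle{\bf n}_\delta,\bar N\rangle(p)\le\langle{\bf n}_\delta^B,\bar N\rangle(p)$ to $\langle{\bf n}_F,\bar N\rangle(p)\le\langle{\bf n}_F^B,\bar N\rangle(p)$; it says nothing about geodesics or convexity of the ball.

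The $F$-convexity, which is what forces $\langle{\bf n}_F^B,\bar N\rangle(p)<0$, is established in the paper by an entirely different route that your proposal does not reconstruct. First, one observes that the $1$-form $\beta$ in the Randers decomposition of $F$ is closed (Proposition 4.1), so by a standard fact (\cref{geod-same}) the $F$-geodesics coincide as unparametrized curves with the geodesics of the Riemannian metric $\alpha$ from \eqref{alpha}. Second, in the Appendix one computes that $(\{x_{n+1}>|\cos\theta_0|\},\alpha)$ has strictly negative sectional curvature and that the spherical cap $\{x\in\mathbb{S}^n_+:x_{n+1}>|\cos\theta_0|\}$ is locally strictly convex in $\alpha$; Alexander's theorem \cite{Al77} then upgrades this to global strict convexity, hence the minimizing $F$-geodesic from $q\in\Omega$ to $p$ lies in $\Omega$ and enters strictly at $p$. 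Without the reduction to $\alpha$-geodesics you have no handle on the $F$-geodesics at all, which you explicitly acknowledge, and the reference you lean on does not supply it. In addition, your closing paragraph (``I would compute the $g$-angles\ldots incompatible in the $2$-plane $V$'') is a sketch rather than a proof; once the $F$-convexity input is repaired, that computation does close, but as written you have not carried it out, and the two inputs you cite do not by themselves yield the contradiction.
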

\begin{proof} For each $q\in \O$, let $p\in \S$ be the point such that $t_0:=d_F(p, q)=d_F(\S, q)$.
We shall prove that $p\in {\rm int}(\S)$. If this is the case, then it is clear that $\Phi(p, t_0)=q$.

Suppose there exists some minimal point $p\in \p \S$. Let $B_F(q, t_0)$ be the backward $F$-geodesic ball of radius $t_0$ centered at $q$, that is, $$B_F(q, t_0)=\{y\in \mathbb{R}^{n+1}_\theta: d_F(y, q)<t_0\}.$$ Thus $\S\subset\mathbb{R}^{n+1}_\theta\setminus B_F(q, t_0)$. Denote ${\bf n}_\delta^B, {\bf n}_g^B, {\bf n}_F^B$ be the inward $\delta$-, $g$-, $F$-unit normal to $B_F(q, t_0)$ respectively. Similarly as above,  $${\bf n}_F^B= {\bf n}^B_g-{\bf v_0}=x_{n+1}{\bf n}_\delta^B-{\bf v_0}.$$
Since $d_F(p, q)=d_F(\S, q)$, we find that
 $$g^F_{{\bf n}_F^B(p)}({\bf n}_F^B(p), \tau)=0, \quad \forall\tau\in T_p(\p \S).$$ In view of  \cref{finsler-coro2}, we see that $${\bf n}_\delta^B(p) \parallel {\bf n}_F^B(p)+{\bf v_0}\in N^\delta_p(\p\S):=\{v\in T_p \rr^{n+1}: \<v, w\>=0 \hbox{ for }w\in T_p(\p \S)\}.$$It is clear that ${\bf n}_\delta(p)\in N_p^\delta(\p\S)$ and $\bar N(p)\in N_p^\delta(\p\S)$. Hence it follows again from $d_F(p, q)=d_F(\S, q)$ that
$$\<{\bf n}_\delta, \bar N\>(p)\le \<{\bf n}_\delta^B, \bar N\>(p).$$
Using the key lemma  given in \cite[Proposition 3.1]{JWXZ23}, we get
$$\<DF^*(p, {\bf n}_\delta), \bar N\>(p)\le \<DF^*(p, {\bf n}_\delta^B), \bar N\>(p).$$
In view of \eqref{finsler5}, we see that $$DF^*(p, {\bf n}_\delta)=DF^*(p, {\bf n}_F+{\bf v_0})={\bf n}_F,\quad DF^*(p, {\bf n}_\delta^B)=DF^*(p, {\bf n}_F^B+{\bf v_0})={\bf n}_F^B.$$
Thus
$$\<{\bf n}_F, \bar N\>(p)\le \<{\bf n}_F^B, \bar N\>(p).$$
Next, we claim that \begin{eqnarray}\label{claim}
\<{\bf n}_F^B, \bar N\>(p)<0.
\end{eqnarray}
To achieve this, we need to see how the $F$-geodesic behaves.
Indeed, by \cref{geod-same}, we see the trajectory of $F$-geodesic is the same as that of the $\alpha$-geodesic, where $\alpha$-geodesic is the geodesic with respect to the Riemanian metric part $\alpha$ of the Randers metric $F$. Recall that $\alpha$ is given  by \eqref{alpha}.
In the appendix we show that $\mathbb{B}_+^{n+1}$ is geodesically strictly convex in $(\rr^{n+1}_+, \alpha)$ (see \cref{geod-convex}). Thus, since $q\in \O\subset \mathbb{B}_+^{n+1}$, we see that $\alpha$-geodesic from $p$ to $q$ is contained in $\O$. As a consequence, the $F$-geodesic from $p$ to $q$ is contained in $\O$.
This implies that ${\bf n}_F^B$, which is exactly the $F$-unit tangential vector of $F$-geodesic from $p$ to $q$, points inward $\O$. This yields the claim \eqref{claim}.
However, the claim contradicts the assumption $\theta(p) \le \theta_0$, because it implies
together with \eqref{normal-rel}  $$\<{\bf n}_F, \bar N\>(p)=\<x_{n+1}{\bf n}_\delta-{\bf v_0}, \bar N\>(p)=x_{n+1} (\cos \theta(x)-\cos \theta_0)\ge 0.$$ This leads to a contradiction.
\end{proof}

\medskip

\subsection{Heintze-Karcher-type inequality in a ball}\

New we prove Theorem \ref{thm1.2} in this subsection.

For each $t\in [0, T)$, define $$\S_t^*=\Phi(A^*\cap (\S\times\{t\})).$$
Then $\S_t^*$ is a smooth hypersurface contained in $\{d_F(\S, \cdot)=t\}$. 
It is clear that $\S_t^*$ solves the flow equation 
\begin{eqnarray}\label{geod-normal-flow0}
\p_t \Phi(p, t) &=&{\bf n}_F(p, t)=x_{n+1}{\bf n}_\delta(p, t)-{\bf v_0}\nonumber\\
&=&\left(x_{n+1}- \<{\bf n}_\delta, {\bf v_0}\>\right){\bf n}_\delta(p, t)- {\bf v_0}^T,
\end{eqnarray}
where ${\bf v_0}^T\in T\S_t^*$ is the tangential component of ${\bf v_0}$.
In the following, we will only use ${\bf n}_\delta$, which we abbreviate as ${\bf n}$.

We need the following evolution equations for corresponding geometric quantities. For the proof, we refer to \cite[Proposition 2.11]{WeX22}.\begin{proposition}\label{basic evolution eq}
Let $x(\cdot, t)$ be a hypersurface flow satisfying \begin{eqnarray*}\label{flow with normal and tangential}
	\p_t x=f{\bf n}+\tau,
\end{eqnarray*} where $\tau$ is tangential part. We have
\begin{enumerate} 
	\item $\p_tdA_t =\left(-fH+\div (\tau)\right)dA_t.$ 
	\item $\p_t {\bf n} =-\n f-h(e_i,T)e_i$.	
	\item $\p_t H=\Delta f+|h|^2 f+ \n_\tau H$.
\end{enumerate}
\end{proposition}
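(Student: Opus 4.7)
The plan is to derive the three identities by a direct variational computation in a local chart on $\Sigma$, decomposing the velocity $\p_t x = f{\bf n}+\tau$ into its normal and tangential parts. The normal-only case $\tau=0$ is classical (cf.\ Huisken's standard computations for mean curvature flow), and the tangential component can be interpreted as an infinitesimal self-diffeomorphism of $\Sigma$, so its contribution to any geometric scalar on $\Sigma$ is the Lie derivative $\mathcal{L}_\tau$ of that scalar, while its contribution to ${\bf n}$ is forced by the orthogonality constraints. Concretely, I would prove the three statements in the listed order, so that (1) supplies the evolution of $g^{ij}$ and $dA_t$ needed for (3).

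For (1), I fix local coordinates $(u^1,\dots,u^n)$ on $\Sigma$ and set $g_{ij}(t)=\<\p_i x,\p_j x\>$. Differentiating in $t$ and using $\<\p_i{\bf n},\p_j x\>=-h_{ij}$ together with the Gauss decomposition $\p_i\tau=\n_i\tau+h_{ik}\tau^k{\bf n}$ gives
\begin{equation*}
\p_t g_{ij}=-2fh_{ij}+\n_i\tau_j+\n_j\tau_i.
\end{equation*}
Jacobi's formula $\p_t\log\sqrt{\det g}=\tfrac12 g^{ij}\p_t g_{ij}$ then yields $\p_t dA_t=(-fH+\div(\tau))dA_t$. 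For (2), differentiating $|{\bf n}|^2=1$ in $t$ shows $\p_t{\bf n}\in T\Sigma$; differentiating $\<{\bf n},\p_i x\>=0$ and substituting $\p_i\p_t x=(\p_i f){\bf n}+f\p_i{\bf n}+\p_i\tau$ gives
\begin{equation*}
\<\p_t{\bf n},\p_i x\>=-\p_i f-h(e_i,\tau),
\end{equation*}
so raising the index produces $\p_t{\bf n}=-\n f-h(e_i,\tau)e_i$. For (3), I first treat the pure normal case by the Simons-type computation: compute $\p_t h_{ij}$ from $h_{ij}=-\<\p_i\p_j x,{\bf n}\>+(\text{Christoffel terms})$ and contract with $g^{ij}$, using the evolution of $g^{ij}$ from (1), to obtain $\p_t H|_{\tau=0}=\Delta f+|h|^2 f$. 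Since $H$ is a scalar on $\Sigma$, the tangential part contributes $\mathcal{L}_\tau H=\n_\tau H$, and adding the two pieces gives (3).

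The main obstacle is bookkeeping for the tangential contribution of $\tau$, especially in (2): the normal is not a scalar on $\Sigma$ but a $\mathbb{R}^{n+1}$-valued map, so the effect of $\tau$ is not a pure Lie derivative but is mediated by the Weingarten relation $\p_i{\bf n}=-h_i{}^j\p_j x$, which is exactly how the term $h(e_i,\tau)e_i$ enters. Once this is handled cleanly---for instance by first checking all three identities in the pure normal case and then adding the infinitesimal reparametrization by $\tau$, using the orthogonality constraints to pin down the extra term in the normal direction---the three evolution equations follow without further difficulty. I expect no additional subtlety at the boundary $\p\Sigma$, since these are purely pointwise-in-time infinitesimal identities.
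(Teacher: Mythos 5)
Your computation is correct and is the standard argument: vary $g_{ij}$ to get (1), use the constraints $|{\bf n}|=1$, $\<{\bf n},\p_ix\>=0$ together with the Weingarten relation to get (2) (where the $T$ in the statement is indeed $\tau$), and contract $\p_th_{ij}$ with the evolving metric, the tangential part entering as $\n_\tau H$ since $H$ is a scalar, to get (3). The paper itself gives no proof but simply cites \cite[Proposition 2.11]{WeX22}, and your direct chart computation is essentially the same standard derivation, valid here since the ambient space is flat so no curvature terms appear.
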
  
For \eqref{geod-normal-flow0}, we have the following evolution equation.
\begin{lemma} If the mean curvature of $\S$ is positive, Then the mean curvature $H(p, t)$ of $\S_t^*$ remains positive and it satisfies
\begin{eqnarray}\label{mean-curv-evol'}
\frac{\p}{\p t}\frac{x_{n+1}-\<{\bf n}, {\bf v_0}\>}{H}\le -\frac{1}{n}x_{n+1}(x_{n+1}-\<{\bf n}, {\bf v_0}\>).
\end{eqnarray}
\end{lemma}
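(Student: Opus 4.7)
The plan is to apply Proposition \ref{basic evolution eq} to the geodesic normal flow \eqref{geod-normal-flow0}, with normal speed $f = x_{n+1}-\langle{\bf n},{\bf v_0}\rangle$ and tangential part $\tau = -{\bf v_0}^T$. Using the inward normal convention (so that strict mean-convexity gives $H>0$), I would aim to derive the two clean identities
\begin{equation*}
    \partial_t H = H\langle{\bf n},E_{n+1}\rangle + |h|^2 x_{n+1}, \qquad \partial_t f = f\,\langle{\bf n},E_{n+1}\rangle,
\end{equation*}
so that $\partial_t(H/f) = |h|^2 x_{n+1}/f$. The pointwise Cauchy--Schwarz bound $|h|^2 \ge H^2/n$ then yields $\partial_t(H/f) \ge (H^2/n)\,x_{n+1}/f$, which rearranges to \eqref{mean-curv-evol'}. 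Positivity of $H$ is preserved because $H/f$ is nondecreasing along the flow, while $f \ge x_{n+1}-|\cos\theta_0|>0$ under the hypothesis \eqref{assump}.

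For $\partial_t H$, I would use item (3) of Proposition \ref{basic evolution eq} together with $\Delta_\Sigma x_{n+1} = H\langle{\bf n},E_{n+1}\rangle$ (immediate since $x_{n+1}$ is linear on $\mathbb{R}^{n+1}$) and the Simons-type identity $\Delta_\Sigma {\bf n} = -|h|^2 {\bf n} - \nabla^\Sigma H$ for the inward unit normal, which gives $\Delta_\Sigma\langle{\bf n},{\bf v_0}\rangle = -|h|^2\langle{\bf n},{\bf v_0}\rangle - \nabla_{{\bf v_0}^T} H$. When these are substituted into $\Delta f + |h|^2 f + \nabla_\tau H$, the $\pm|h|^2\langle{\bf n},{\bf v_0}\rangle$ pair and the $\pm\nabla_{{\bf v_0}^T}H$ pair cancel, leaving the desired formula for $\partial_t H$.

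For $\partial_t f$, I would compute $\partial_t x_{n+1} = \langle\partial_t\Phi, E_{n+1}\rangle = f\langle{\bf n},E_{n+1}\rangle - \langle{\bf v_0}^T,E_{n+1}\rangle$ and use item (2) of Proposition \ref{basic evolution eq} to obtain $\partial_t\langle{\bf n},{\bf v_0}\rangle = -\nabla_{{\bf v_0}^T} f + h({\bf v_0}^T,{\bf v_0}^T)$. The crucial input is the Weingarten-type identity $\nabla_{{\bf v_0}^T}\langle{\bf n},{\bf v_0}\rangle = -h({\bf v_0}^T,{\bf v_0}^T)$, which rearranges to $\nabla_{{\bf v_0}^T} f = \langle{\bf v_0}^T,E_{n+1}\rangle + h({\bf v_0}^T,{\bf v_0}^T)$. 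Plugging this in causes every term involving ${\bf v_0}^T$ to telescope, leaving just $\partial_t f = f\langle{\bf n},E_{n+1}\rangle$.

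The main obstacle is the consistent bookkeeping of sign conventions (inward vs.\ outward normal, definition of $h_{ij}$, and the correct sign in Simons' formula for $\Delta_\Sigma {\bf n}$), since the whole argument depends on an exact pointwise cancellation of every term containing ${\bf v_0}$ or ${\bf v_0}^T$. These cancellations are not accidental: they are a manifestation of the fact that the Randers metric $F$ has been engineered precisely so that its geodesic normal flow produces the combination $x_{n+1}+\cos\theta_0\langle\nu,E_{n+1}\rangle$ appearing in the Heintze--Karcher integrand of Theorem \ref{thm1.2}.
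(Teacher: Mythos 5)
Your proposal is correct and follows essentially the same route as the paper: you invoke Proposition \ref{basic evolution eq} for the flow with speed $f = x_{n+1}-\langle{\bf n},{\bf v_0}\rangle$ and tangential part $\tau=-{\bf v_0}^T$, use $\Delta_{\Sigma}x_{n+1}=H\langle{\bf n},E_{n+1}\rangle$ together with Simons' identity to cancel all ${\bf v_0}$-terms in $\partial_t H$, use the Weingarten identity to telescope $\partial_t f$ down to $f\langle{\bf n},E_{n+1}\rangle$, and then apply $|h|^2\ge H^2/n$ to the quotient $H/f$. The paper's proof carries out exactly these two computations and concludes identically, so there is no substantive difference to report.
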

\begin{proof}
The mean curvature $H$ satisfies the evolution equation:
\begin{eqnarray*}
\frac{\p}{\p t}H&=&\Delta_{\S_t^*}(x_{n+1}- \<{\bf n}, {\bf v_0}\>)+|h|^2(x_{n+1}- \<{\bf n}, {\bf v_0}\>)-\nabla_{{\bf v_0}^T} H
\\&=&H\<{\bf n}, E_{n+1}\>+\< |h^2|{\bf n}+\nabla H, {\bf v_0}\>+|h|^2(x_{n+1}- \<{\bf n}, {\bf v_0}\>)-\nabla_{{\bf v_0}^T} H
\\&= &H\<{\bf n}, E_{n+1}\>+|h|^2x_{n+1}.
\end{eqnarray*}
One can also check that  $x_{n+1}-\<{\bf n}, {\bf v_0}\>$ satisfies the evolution equation:
\begin{eqnarray*}
\frac{\p}{\p t}(x_{n+1}-\<{\bf n}, {\bf v_0}\>)&=&\<{x_{n+1}\bf n}-{\bf v_0}, E_{n+1}\>+\<\nabla (x_{n+1}- \<{\bf n}_\delta, {\bf v_0}\>), {\bf v_0}\>-h({\bf v_0}^T, {\bf v_0}^T)
\\&=&\left(x_{n+1}- \<{\bf n}, {\bf v_0}\>\right)\<{\bf n}, E_{n+1}\>.
\end{eqnarray*}
It follows that
\begin{eqnarray*}
\frac{\p}{\p t}\frac{H}{x_{n+1}-\<{\bf n}, {\bf v_0}\>}\ge \frac{H^2}{n}\frac{x_{n+1}}{x_{n+1}-\<{\bf n}, {\bf v_0}\>}.
\end{eqnarray*}
Hence $H$ remains positive and \eqref{mean-curv-evol'} follows.
\end{proof}

\begin{lemma}
\begin{eqnarray*}
\frac{d}{d t}\int_{\S_t^*}\frac{x_{n+1}-\<{\bf n}, {\bf v_0}\>}{H}\le -\frac{n+1}{n}\int_{\S_t^*}x_{n+1}(x_{n+1}-\<{\bf n}, {\bf v_0}\>).
\end{eqnarray*}
\end{lemma}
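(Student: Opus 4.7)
The plan is to differentiate the integral under the sign via Proposition~\ref{basic evolution eq} and combine with the pointwise bound \eqref{mean-curv-evol'}. Writing the flow velocity in \eqref{geod-normal-flow0} as $\partial_t\Phi = f\,{\bf n} + \tau$ with
\[
f = x_{n+1} - \langle {\bf n}, {\bf v_0}\rangle, \qquad \tau = -{\bf v_0}^T,
\]
item (1) of Proposition~\ref{basic evolution eq} yields
\[
\frac{d}{dt}\int_{\Sigma_t^*}\frac{f}{H}\,dA_t
= \int_{\Sigma_t^*}\!\left[\partial_t\!\left(\frac{f}{H}\right) + \frac{f}{H}\bigl(-fH + \operatorname{div}\tau\bigr)\right] dA_t.
\]
The first bracketed summand is controlled by \eqref{mean-curv-evol'} to give $\partial_t(f/H)\le -\tfrac{1}{n}fx_{n+1}$, while the middle piece reduces to $-f^2$.

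For the remaining term I would compute $\operatorname{div}_{\Sigma_t^*}(-{\bf v_0}^T)$ \emph{pointwise} rather than by integration by parts. Since ${\bf v_0}=\cos\theta_0\,E_{n+1}$ is a parallel Euclidean vector field, $\sum_i\langle\bar\nabla_{e_i}{\bf v_0}, e_i\rangle = 0$; the splitting ${\bf v_0} = {\bf v_0}^T + \langle{\bf v_0},{\bf n}\rangle{\bf n}$ together with $\sum_i\langle\bar\nabla_{e_i}{\bf n}, e_i\rangle = -H$ then gives
\[
\operatorname{div}_{\Sigma_t^*}({\bf v_0}^T) \;=\; H\,\langle{\bf v_0}, {\bf n}\rangle,
\]
so $(f/H)\operatorname{div}\tau = -f\langle{\bf v_0}, {\bf n}\rangle$.

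Assembling the three contributions and applying the elementary identity
\[
f^2 + f\langle{\bf n},{\bf v_0}\rangle \;=\; f\bigl((x_{n+1}-\langle{\bf n},{\bf v_0}\rangle)+\langle{\bf n},{\bf v_0}\rangle\bigr) \;=\; f\,x_{n+1}
\]
collapses $-\int f^2\,dA_t - \int f\langle{\bf n},{\bf v_0}\rangle\,dA_t$ into $-\int f\,x_{n+1}\,dA_t$; combined with the $-\tfrac{1}{n}\int fx_{n+1}\,dA_t$ coming from \eqref{mean-curv-evol'}, this produces exactly $-\tfrac{n+1}{n}\int_{\Sigma_t^*} fx_{n+1}\,dA_t$, as desired.

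The main obstacle is that $\Sigma_t^*$ carries the non-empty boundary $\partial\Sigma_t^*\subset\partial\mathbb{B}^{n+1}$, so one cannot afford to integrate by parts the $\int (f/H)\operatorname{div}\tau\,dA_t$ term without then having to control a boundary flux $\int_{\partial\Sigma_t^*}(f/H)\langle{\bf v_0}^T,\mu\rangle\,d\sigma_t$ whose sign is not manifestly governed by the capillary condition $\theta\le\theta_0$. The key observation is that computing $\operatorname{div}\tau$ pointwise bypasses this boundary issue entirely; this is the structural reason why the Section~2 argument extends to the capillary case, provided the flow velocity is taken to be the $F$-unit inward normal whose tangential component is precisely $-{\bf v_0}^T$.
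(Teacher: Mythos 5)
Your proof is correct and follows essentially the same route as the paper: apply item (1) of Proposition~\ref{basic evolution eq} with $f = x_{n+1}-\<{\bf n},{\bf v_0}\>$ and $\tau = -{\bf v_0}^T$, substitute the pointwise bound \eqref{mean-curv-evol'}, and replace $\div_{\S_t^*}{\bf v_0}^T$ pointwise by $H\<{\bf n},{\bf v_0}\>$ so that the identity $f+\<{\bf n},{\bf v_0}\>=x_{n+1}$ collapses the terms to $-\tfrac{n+1}{n}\int_{\S_t^*}fx_{n+1}$. Your remark about avoiding a boundary flux is sound motivation, though the paper simply makes the same pointwise substitution without commenting on the alternative.
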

\begin{proof} From \eqref{mean-curv-evol}, we have
\begin{eqnarray*}
&&\frac{d}{d t}\int_{\S_t^*}\frac{x_{n+1}-\<{\bf n}, {\bf v_0}\>}{H}\\&\le &\int_{\S_t^*} \left(-\frac{1}{n}x_{n+1}(x_{n+1}-\<{\bf n}, {\bf v_0}\>)\right)+
\frac{x_{n+1}-\<{\bf n}, {\bf v_0}\>}{H} \left[-H  (x_{n+1}-\<{\bf n}, {\bf v_0}\>)-{\rm div}_{\S_t^*}{\bf v_0}^T\right]
\\&=&\int_{\S_t^*} \left(-\frac{1}{n}x_{n+1}(x_{n+1}-\<{\bf n}, {\bf v_0}\>)\right)+
\frac{x_{n+1}-\<{\bf n}, {\bf v_0}\>}{H} \left[-H  (x_{n+1}-\<{\bf n}, {\bf v_0}\>)-H\<{\bf n}, {\bf v_0}\>\right]
\\&=&
-\frac{n+1}{n} \int_{\S_t^*}x_{n+1}(x_{n+1}-\<{\bf n}, {\bf v_0}\>).\end{eqnarray*}
\end{proof}

\noindent{\it Proof of Theorem \ref{thm1.2}.}
Let $$Q(t)=\int_{\S_t^*} \frac{x_{n+1}-\<{\bf n}, {\bf v_0}\>}{H}.$$
It follows that for $\tau\in (0, T)$,
\begin{eqnarray*}Q(0)-Q(\tau)&=& -\int_0^\tau \left( \frac{d}{d t}\int_{\S_t^*}\frac{x_{n+1}-\<{\bf n}, {\bf v_0}\>}{H}\right) dt\\&\ge &\frac{n+1}{n} \int_0^\tau \int_{\S_t^*} x_{n+1}(x_{n+1}-\<{\bf n}, {\bf v_0}\>)\\&=&\frac {n+1} n\int_{\Phi(A^*\cap (\S\times [0, \tau]))} x_{n+1}.
\end{eqnarray*}
Let $\tau\to T$, recalling that $\O\subset \Phi(A)$ and $A\setminus A^*$ has measure zero, we conclude
\begin{eqnarray*}
Q(0)=\int_{\S} \frac{x_{n+1}-\<{\bf n}, {\bf v_0}\>}{H}\ge\frac{n+1}{n}  \int_{\O} x_{n+1}.
\end{eqnarray*}
The equality characterization for this case follows similarly as in free boundary case.

\medskip

\subsection{Heintze-Karcher-type inequality in the half-space}\label{4.3}\ 

In this subsection, we remark that the approach to Theorem \ref{thm1.2} can also be easily adapted to prove 
 the following Heintze-Karcher inequality for capillary hypersurfaces in the half-space $\mbR^{n+1}_+$, which has been proved in our previous work \cite{JWXZ23} together with Jia and Zhang. 

\medskip

\noindent{\bf Theorem B} (\cite{JWXZ22})
{\it   Let $\theta_0\in(0, \pi)$ and let $\S\subset\overline{\rr^{n+1}_+}$ be a smooth, compact, embedded, strictly mean convex $\theta$-capillary hypersurface, with  $\theta(x)\le \theta_0$ for every $x\in \partial\Sigma$. Let $\O$ denote the enclosed domain by $\S$ and $\p \rr^{n+1}_+$. Then it holds
    \begin{eqnarray}\label{EQ-HK-halfspace}
 \int_\S \frac{1-\cos\theta_0\<\nu, E_{n+1}\>}{H}  \geq\frac{n+1}{n}|\O|.
    \end{eqnarray}
    Equality in \eqref{EQ-HK-halfspace} holds if and only if $\S$ is a $\theta_0$-capillary spherical cap.}

\medskip

For Theorem B,  the Finsler metric $F$ of Randers-type in \eqref{finsler} we shall choose is generated by navigation data $(\delta, -\cos\theta_0 E_{n+1})$. This is indeed a Minkowski metric, that is, $F$ in \eqref{finsler} generated by navigation data $(\delta, -\cos\theta_0 E_{n+1})$ does not depend on $x$. Thanks to it, the geodesic normal flow is much simpler. Precisely, the geodesics for $F$ are given by straight lines and the geodesic normal flow from $\S$ is given by \begin{eqnarray}\label{geod-normal-flow}
x(p, t)=x(p)-t(\nu(p)-\cos\theta_0 E_{n+1}), \quad p\in \S.
\end{eqnarray}
\eqref{geod-normal-flow} is exactly the parallel hypersurfaces defined in \cite{JWXZ22}.
One can verify that $$\int_\S \frac{1-\cos\theta_0\<\nu, E_{n+1}\>}{H}- \frac{n+1}{n}|\O|$$ has a  monotone property under the geodesic normal flow \eqref{geod-normal-flow}. The Heintze-Karcher inequality \eqref{EQ-HK-halfspace} follows. We leave the details to the reader.

\

\appendix

\section{}
In this Appendix we  show that $\{x\in \mathbb{S}^{n}_+ | x_{n+1} > | \cos \theta_0|\}
\subset (\rr^{n+1}_+, \alpha)$ is globally strictly convex, which was used in the proof of Lemma \ref{lem4.4}.

Consider 
$(\{x\in \rr^{n+1}_+ | x_{n+1} > |\cos \theta_0|\},  \alpha)$ 
equipped with Riemannian metric
\begin{eqnarray*}
   \a&=& \frac 1 {x^2_{n+1} -\cos ^2 \theta_0} \sum_{i=1}^n dx_i^2 +\frac{x_{n+1}^{2}}{(x_{n+1}^2-\cos \theta_0)^2} dx_{n+1}^2\\&=:& A\sum_{i=1}^n dx_i^2 +B dx_{n+1}^2.
\end{eqnarray*}

\begin{proposition} $(\{x\in \rr^{n+1}_+ | x_{n+1} > |\cos \theta_0|\},  \alpha)$ has negative sectional curvature.
\end{proposition}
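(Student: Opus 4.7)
The plan is to recognise that, up to a change of the radial coordinate, $(\{x_{n+1}>|\cos\theta_0|\},\alpha)$ is nothing but the standard hyperbolic space $\mathbb{H}^{n+1}$ of constant sectional curvature $-1$ written in horospherical coordinates; negativity of the sectional curvature then follows at once. Since the coefficients $A$ and $B$ depend only on $x_{n+1}$, the natural move is to look for a new coordinate $t=t(x_{n+1})$ which converts $\alpha$ into a warped product
$$\alpha = dt^2 + f(t)^2 \sum_{i=1}^n dx_i^2.$$

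Concretely, I would set $t(x_{n+1}) := \tfrac{1}{2}\log(x_{n+1}^2-\cos^2\theta_0)$, which is a diffeomorphism from $(|\cos\theta_0|,\infty)$ onto $\mathbb{R}$. A direct computation gives
$$dt \;=\; \frac{x_{n+1}}{x_{n+1}^2-\cos^2\theta_0}\,dx_{n+1},$$
so that $B\,dx_{n+1}^2 = dt^2$ and $A = e^{-2t}$. In the coordinates $(x_1,\dots,x_n,t)$ the metric therefore becomes
$$\alpha \;=\; dt^2 + e^{-2t}\sum_{i=1}^n dx_i^2,$$
i.e.\ a warped product with flat base $(\mathbb{R}^n,\sum dx_i^2)$ and warping function $f(t)=e^{-t}$.

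It remains to read off the sectional curvature. For any warped product $g=dt^2+f(t)^2 g_0$ with flat base, the classical formulas (see e.g.\ O'Neill) give sectional curvature $-f''(t)/f(t)$ for any $2$-plane containing $\partial_t$ and $-(f'(t)/f(t))^2$ for any $2$-plane tangent to a level slice $\{t=\mathrm{const}\}$. For $f(t)=e^{-t}$ both quantities equal $-1$, so every sectional curvature of $\alpha$ is identically $-1$, and in particular strictly negative, as claimed.

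There is no real obstacle here: once the correct substitution is guessed, the rest is a one-line application of the warped-product curvature identity. As a byproduct one identifies $(\{x_{n+1}>|\cos\theta_0|\},\alpha)$ isometrically with an open subset of $\mathbb{H}^{n+1}$, which is entirely consistent with (and in fact sharpens) the geodesic convexity of $\mathbb{B}^{n+1}_+$ exploited in the proof of \cref{lem4.4}.
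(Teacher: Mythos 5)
Your proof is correct, and it takes a genuinely different and more illuminating route than the paper's. The paper computes the Christoffel symbols of $\alpha$ directly and evaluates the relevant curvature components $R^i_{ijj}$ and $R^{n+1}_{(n+1)jj}$ term by term, concluding negativity from the signs of the resulting expressions. You instead notice that, since $A$ and $B$ depend only on $x_{n+1}$, one can look for a radial reparametrization bringing $\alpha$ into warped-product form, and the substitution $t=\tfrac12\log(x_{n+1}^2-\cos^2\theta_0)$ does exactly this: $B\,dx_{n+1}^2=dt^2$ and $A=e^{-2t}$, so $\alpha=dt^2+e^{-2t}\sum dx_i^2$, which (after the further substitution $s=e^t$) is precisely the Poincar\'e upper half-space metric $s^{-2}\bigl(ds^2+\sum dx_i^2\bigr)$. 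This identifies $(\{x_{n+1}>|\cos\theta_0|\},\alpha)$ isometrically with $\mathbb{H}^{n+1}$, and so establishes the sharper statement that the sectional curvature is identically $-1$, not merely negative. Your route is shorter, avoids the curvature-tensor bookkeeping entirely (where the paper's intermediate expression for $R^{n+1}_{(n+1)jj}$ appears to contain an algebraic slip, even though the final negativity is still manifest), and has the pleasant side effect that the geodesic convexity of $\mathbb{B}^{n+1}_+$ used in \cref{lem4.4} could now be read off from the known behaviour of spheres in the half-space model of $\mathbb{H}^{n+1}$.
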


\begin{proof} Since all functions considered  depend only on $x_{n+1}$, for simplicity of notation we use $\p $ to denote $\p_{x_{n+1}}$.
Let $\ell =A^{-1}=x_{n+1}^2-\cos^2\theta_0$. It is clear that
\[
\p A =-2x_{n+1} \ell^{-2}, \quad \p B=2x_{n+1}\ell^{-2} -4x^3_{n+1} \ell^{-3}.
\]
It is easy to check that the non-vanishing Christoffel symbols of the metric $\a$ are
\begin{eqnarray*}
    \Gamma^{n+1}_{ii}=-\frac 12 B^{-1}\p A,\quad
    \Gamma^{i}_{i(n+1)}=\frac 12 A^{-1}\p A,\quad
    \Gamma^{n+1}_{(n+1)(n+1)}&=&\frac 12 B^{-1}\p B,\
\end{eqnarray*}
where $i\le n.$ One can check easily that
for  fixed $i,j\le n$
\[
R^i_{ijj}=\Gamma^i_{i(n+1)} \Gamma ^{n+1}_{jj} =-\frac 14 A^{-1}B^{-1} (\p A)^2=-\ell^{-1},
\]Hence the sectional curvatures of the planes spanned by $\frac{\p}{\p x_i}$ and $\frac{\p}{\p x_j}$ are $-1$, for $i,j\le n$.
Moreover, for fixed $j\le n$
\begin{eqnarray*}
    R^{n+1}_{(n+1) {jj}}
    &=& \p \Gamma^{n+1}_{jj}+\Gamma^{n+1}_{(n+1)q}\Gamma^q_{jj}-\Gamma^{n+1}_{jq}\Gamma^q_{(n+1)j}\\&=&
    -\frac 12 \p (B^{-1} \p A)-\frac{1}{4} B^{-2} \p B\p A+\frac 14 A^{-1}B^{-1} (\p A)^2\\
    &=& -x_{n+1}^{-2}+x_{n+1}^2\ell^{-2} -2 x_{n+1}^4 \ell^{-3}+\ell^{-1}\\
    &=& -x_{n+1}^{-2}-\cos^2\theta_0 x^2_{n+1} \ell ^{-3}-\cos^2\theta_0\ell^{-1}(1+x^2_{n+1}\ell^{-1})
.
\end{eqnarray*}
It follows that the sectional curvature of the planes spanned by $\frac{\p}{\p x_i}$ and $\frac{\p}{\p x_{n+1}}$ is also negative.
\end{proof}

\begin{proposition} $\{x\in \mathbb{S}^{n}_+ | x_{n+1} > |\cos \theta_0|\}\subset (\rr^{n+1}_+, \alpha)$ with the induced metric has positive second fundamental form.
\end{proposition}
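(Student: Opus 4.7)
My plan is to view $\Sigma := \{x \in \mathbb{S}^n_+ \mid x_{n+1} > |\cos\theta_0|\}$ as the level set $\{f = 0\}$ of $f(x) = |x|^2 - 1$ in $(\rr^{n+1}_+, \alpha)$ and compute the second fundamental form from the level-set formula
\[
h(X, Y) = \frac{\mathrm{Hess}^\alpha f(X, Y)}{|\nabla^\alpha f|_\alpha}, \qquad X, Y \in T_x \Sigma,
\]
with the $\alpha$-unit normal chosen pointing into $\mathbb{B}^{n+1}_+$ so that ``positive second fundamental form'' matches the geodesic-convexity convention used in the proof of \cref{lem4.4}.

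First I would compute $|\nabla^\alpha f|_\alpha$. Writing $\phi := x_{n+1}^2 - \cos^2\theta_0$, the inverse metric has $\alpha^{ii} = \phi$ for $i \le n$ and $\alpha^{(n+1)(n+1)} = \phi^2/x_{n+1}^2$, and with $\partial_i f = 2x_i$ a short calculation gives
\[
|\nabla^\alpha f|_\alpha^2 = 4\phi \sum_{i \le n} x_i^2 + 4\phi^2 = 4\phi(1 - x_{n+1}^2 + \phi) = 4\phi \sin^2\theta_0,
\]
so the gradient is nonvanishing on $\Sigma$. Second, I would compute the Hessian components $\mathrm{Hess}^\alpha f_{ij} = 2\delta_{ij} - 2 x_k \Gamma^k_{ij}$ using the Christoffel symbols assembled for the previous Proposition. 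The three case-splits produce a very structured matrix: the upper-left $n\times n$ block vanishes (since $2 x_{n+1} \Gamma^{n+1}_{ii} = 2$ exactly cancels $2\delta_{ii}$), while the remaining nontrivial entries are $\mathrm{Hess}^\alpha f_{i,n+1} = 2 x_i x_{n+1}/\phi$ for $i \le n$ and $\mathrm{Hess}^\alpha f_{n+1,n+1} = 4 x_{n+1}^2/\phi$. Third, I would restrict the resulting quadratic form to the tangent space $T_x\Sigma = \{X : \sum_{i=1}^{n+1} x_i X^i = 0\}$, substitute $\sum_{i \le n} x_i X^i = -x_{n+1} X^{n+1}$, and then polarize to obtain a closed-form expression for $h(X, Y)$ whose sign is to be verified.

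The main obstacle I expect is the third step: the diagonal $(n+1,n+1)$-entry contributes a term proportional to $(X^{n+1})^2$ with one sign, while the mixed terms contribute the opposite sign after tangency is imposed, so the sign analysis depends on a careful accounting of the cancellation between $4 x_{n+1}^2/\phi$ and $(4x_{n+1}/\phi) X^{n+1}\sum_{i\le n} x_i X^i$. A conceptually cleaner route, which I would run in parallel as a cross-check, is to introduce the coordinate $t := \sqrt{x_{n+1}^2 - \cos^2\theta_0}$; under this substitution $\alpha$ takes the standard form $t^{-2}(|dy|^2 + dt^2)$ of the upper half-space model of hyperbolic space on $\{t > 0\}$, and $\Sigma$ is transformed into the Euclidean sphere $|y|^2 + t^2 = \sin^2\theta_0$ in the $(y,t)$-coordinates. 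From this model the geometric content of the claim becomes transparent, and one can directly deduce the geodesic convexity of $\mathbb{B}^{n+1}_+$ needed in the proof of \cref{lem4.4}.
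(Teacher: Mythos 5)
The decisive step in your plan is exactly the one you postpone, and it does not come out with the sign the Proposition asserts. Your Hessian entries are correct: with $\phi=x_{n+1}^2-\cos^2\theta_0$ one finds $\mathrm{Hess}^\alpha f_{ij}=0$ for $i,j\le n$, $\mathrm{Hess}^\alpha f_{i,n+1}=2x_ix_{n+1}/\phi$ and $\mathrm{Hess}^\alpha f_{n+1,n+1}=4x_{n+1}^2/\phi$. But for a tangent vector $X$, i.e.\ with $\sum_{i\le n}x_iX^i=-x_{n+1}X^{n+1}$,
\begin{equation*}
\mathrm{Hess}^\alpha f(X,X)=\frac{4x_{n+1}}{\phi}\,X^{n+1}\Bigl(\sum_{i\le n}x_iX^i+x_{n+1}X^{n+1}\Bigr)=0,
\end{equation*}
so the cancellation you flag as the ``main obstacle'' is exact: the second fundamental form vanishes identically. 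Your own cross-check says the same thing and explains why: under $t=\sqrt{x_{n+1}^2-\cos^2\theta_0}$ the metric $\alpha$ becomes the upper half-space hyperbolic metric $t^{-2}(|dy|^2+dt^2)$, and the hypersurface becomes the hemisphere $|y|^2+t^2=\sin^2\theta_0$ centered at a point of the ideal boundary $\{t=0\}$, which is totally geodesic in hyperbolic space (it is the fixed-point set of an inversion), not strictly convex. So your outline, carried to completion, proves $h\equiv 0$; it cannot be closed into a proof of positivity, because the statement as written is too strong.

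For comparison, the paper's own computation pairs the coordinate second derivatives $r_{\varphi\varphi},r_{\beta\beta}$ directly with the normal $V$, i.e.\ it omits the Christoffel correction $\Gamma^k_{ij}r^ir^j\p_k$ of the Levi-Civita connection of $\alpha$, which is only legitimate for a flat metric in Cartesian coordinates. Restoring it (for instance $\nabla_{r_\beta}r_\beta=r_{\beta\beta}+\frac{\sin^2\varphi}{\cos\varphi}E_3$) turns the entry $AC\sin\varphi$ into $AC\sin\varphi-B\frac{\sin^2\varphi}{\cos\varphi}=0$, in agreement with your level-set computation; so the discrepancy is not an error on your side. What is actually needed in \cref{lem4.4} does survive, in the weaker form your hyperbolic model gives for free: $\mathbb{B}^{n+1}\cap\{x_{n+1}>|\cos\theta_0|\}$ corresponds to a hyperbolic half-space bounded by a totally geodesic hypersurface, hence is geodesically convex, and an $\alpha$-geodesic from $p\in\p\mathbb{B}^{n+1}$ to an interior point $q$ cannot be tangent to the sphere at $p$ (a tangent geodesic would remain in the totally geodesic hemisphere and never reach $q$), which still yields the strict inequality \eqref{claim}. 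So the statement you should aim for---and the one your second route proves cleanly---is ``totally geodesic'', with \cref{geod-convex} replaced by this convexity assertion rather than strict convexity.
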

 \begin{proof}Since the metric $\alpha$ and $\mathbb{S}^n_+$ is axially symmetric about $x_{n+1}$,  we only need to prove the Proposition for $n=2$. Let $$r(\varphi, \beta)=(x, y, z)=(\sin \varphi\cos\beta, \sin \varphi\sin\beta, \cos \varphi), \quad\varphi\in(0,\theta_0), \,\beta\in (0,2\pi)$$ be the parametrized $\mathbb{S}^2_+$ in $\rr^3_+$.
Then $$r_\varphi=(\cos \varphi\cos\beta, \cos \varphi\sin\beta, -\sin \varphi), \quad r_\beta=(-\sin \varphi\sin\beta, \sin \varphi\cos\beta, 0).$$
\begin{eqnarray*}
&&r_{\varphi\varphi}=(-\sin \varphi\cos\beta, -\sin \varphi\sin\beta, -\cos \varphi),\\
&&r_{\varphi\beta}=(-\cos \varphi\sin\beta, \cos \varphi\cos\beta, 0),\\
&&r_{\beta\beta}=(-\sin \varphi\cos\beta, -\sin \varphi\sin\beta, 0).
\end{eqnarray*}

An inner normal (not unit) to $\mathbb{S}^2_+$ is given by solving $V$ from $\alpha(r)(r_\varphi, V)=0$ and $\alpha(r)(r_\beta, V)=0$.
We may write
$V=(-C\cos\beta, -C\sin\beta, -1)$. It can be solved that
$$C= \frac{\sin\varphi\cos\varphi}{\cos^2\varphi-\cos^2\theta_0}>0.$$
It follows that
\begin{eqnarray*}
\alpha(r_{\varphi\varphi}, V)&=&  AC\sin \varphi+ B\cos\varphi,\\
\alpha(r_{\varphi\beta}, V)&=&0,\\
\alpha(r_{\beta\beta}, V)&=&
AC\sin\varphi.
\end{eqnarray*}
It follows  that 
 $\{x\in \mathbb{S}^{n}_+ | x_{n+1} > | \cos \theta_0|\}$
  in $(\rr^{n+1}_+, \alpha)$ has positive second fundamental form, hence is locally strictly convex.

\end{proof}

Alexander \cite{Al77} proved that a locally strictly convex hypersurface in a negative curved space is globally strictly convex. Hence we get
\begin{corollary}\label{geod-convex}
$\{x\in \mathbb{S}^{n}_+ | x_{n+1} > | \cos \theta_0|\}
\subset (\rr^{n+1}_+, \alpha)$ is globally strictly convex.
\end{corollary}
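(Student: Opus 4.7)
The plan is to combine the two preceding propositions with Alexander's theorem from \cite{Al77}, which is the natural mechanism for upgrading local convexity to global convexity in a negatively curved ambient space. The first proposition shows that the ambient Riemannian manifold $(\{x\in \rr^{n+1}_+ \mid x_{n+1} > |\cos\theta_0|\}, \alpha)$ has (strictly) negative sectional curvature on every coordinate two-plane, and since $\alpha$ has the warped-product-like diagonal form $A\sum_{i=1}^n dx_i^2 + B\, dx_{n+1}^2$ depending only on $x_{n+1}$, the curvatures of all other two-planes are convex combinations of these and are hence also negative. The second proposition shows that, as a hypersurface in this ambient space, the spherical cap $\{x\in \mathbb{S}^n_+ \mid x_{n+1} > |\cos\theta_0|\}$ has pointwise positive second fundamental form, i.e.\ is locally strictly convex.

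With these two ingredients in place, I would simply invoke Alexander's theorem, which asserts that a complete immersed locally strictly convex hypersurface in a simply connected Riemannian manifold of nonpositive sectional curvature is embedded and globally strictly convex (i.e.\ bounds a convex body, and every geodesic of the ambient manifold meets it in at most two points, tangentially at isolated contact). Applied to our setting, this immediately yields that $\{x\in \mathbb{S}^n_+ \mid x_{n+1} > |\cos\theta_0|\}$ is globally strictly convex in $(\rr^{n+1}_+, \alpha)$.

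The only subtle point (and the main potential obstacle) is checking that the hypotheses of Alexander's theorem are genuinely satisfied in our situation. In particular, one should verify that the open domain $\{x_{n+1} > |\cos\theta_0|\}$ endowed with $\alpha$ is geodesically complete in the relevant sense (or at least that the geodesics joining points of the spherical cap stay inside the domain where $\alpha$ is defined), and that the cap together with its boundary behaves well. These issues are mild here because the metric $\alpha$ is rotationally symmetric about the $x_{n+1}$-axis, $\alpha$-geodesics are (up to reparametrization) the same trajectories as the $F$-geodesics studied earlier, and the cap $\{x_{n+1} \ge |\cos\theta_0|\} \cap \mathbb{S}^n$ is a compact hypersurface with boundary contained in the open domain of definition of $\alpha$. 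Thus the application of Alexander's theorem goes through without additional work, completing the proof.
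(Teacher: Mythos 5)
Your proposal follows the paper's argument exactly: the two appendix propositions establish negative ambient sectional curvature for $(\{x_{n+1}>|\cos\theta_0|\},\alpha)$ and pointwise positive second fundamental form of the spherical cap, and the corollary is then deduced by invoking Alexander's theorem from \cite{Al77}, precisely as the paper does. One small slip in your added discussion: the boundary sphere $\{x_{n+1}=|\cos\theta_0|\}\cap\mathbb{S}^n$ of the cap is \emph{not} contained in the open domain $\{x_{n+1}>|\cos\theta_0|\}$ (it lies on its topological boundary, where $\alpha$ degenerates), but this does not affect the core argument, which the paper itself leaves at the same level of detail.
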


\

\printbibliography

\end{document}